\newcommand{\C}{\mathbb C}
\newcommand{\Ps}{\mathbb P}
\newcommand{\cA}{\mathcal A}
\newcommand{\cC}{\mathcal C}
\newcommand{\cD}{\mathcal D}
\newcommand{\cE}{\mathcal E}
\newcommand{\cF}{\mathcal F}
\newcommand{\cI}{\mathcal I}
\newcommand{\cJ}{\mathcal J}
\newcommand{\cM}{\mathcal M}
\newcommand{\cN}{\mathcal N}
\newcommand{\cO}{\mathcal O}
\newcommand{\cP}{\mathcal P}
\newcommand{\cR}{\mathcal R}
\newcommand{\cT}{\mathcal T}
\newcommand{\cV}{\mathcal V}
\newcommand{\h}{\mathfrak h}
\newcommand{\g}{\mathfrak g}
\newcommand{\m}{\mathfrak m}
\newcommand{\GK}{\mathcal{G}\mathcal{K}}
\DeclareMathOperator{\Aut}{Aut}
\DeclareMathOperator{\Assoc}{Assoc}
\DeclareMathOperator{\Der}{Der}
\DeclareMathOperator{\Hom}{Hom}
\DeclareMathOperator{\Ext}{Ext}
\DeclareMathOperator{\spec}{Spec}
\DeclareMathOperator{\Sym}{Sym}
\DeclareMathOperator{\Lie}{Lie}
\DeclareMathOperator{\QC}{QC}
\DeclareMathOperator{\Mod}{\text{-}mod}
\newcommand{\cHom}{\mathcal{H}om}
\newcommand{\cEnd}{\mathcal{E}nd}
\newcommand{\lala}{\langle \langle}
\newcommand{\rara}{\rangle \rangle}
\newcommand{\llcom}{\llbracket}
\newcommand{\rrcom}{\rrbracket}
\newcommand{\Xdr}{X_{dR}}
\newcommand{\Anh}{\widehat{A}_n}
\newcommand{\Onh}{\widehat{\mathcal{O}}_n}
\newcommand{\JNC}{\mathcal{J}^{NC}}
\newcommand{\ind}{\textbf{ind}}
\newtheorem{thm}{Theorem}[section]
\newtheorem{prop}[thm]{Proposition}
\newtheorem{cor}[thm]{Corollary}
\newtheorem{lem}[thm]{Lemma}
\theoremstyle{remark}
\newtheorem{rem}[thm]{Remark}
\theoremstyle{definition}
\newtheorem{example}[thm]{Example}
\theoremstyle{definition}
\newtheorem{defn}[thm]{Definition}
\title{Formal Geometry for Noncommutative Manifolds}
\author{Hendrik Orem}
\begin{document}

\begin{abstract}
This paper develops the tools of formal algebraic geometry in the setting of noncommutative manifolds, roughly ringed spaces locally modeled on the free associative algebra. We define a notion of noncommutative coordinate system, which is a principal bundle for an appropriate group of local coordinate changes. These bundles are shown to carry a natural flat connection with properties analogous to the classical Gelfand-Kazhdan structure.

Every noncommutative manifold has an underlying smooth variety given by abelianization. A basic question is existence and uniqueness of noncommutative thickenings of a smooth variety, i.e., finding noncommutative manifolds abelianizing to a given smooth variety. We obtain new results in this direction by showing that noncommutative coordinate systems always arise as reductions of structure group of the commutative bundle of coordinate systems on the underlying smooth variety; this also explains a relationship between $\cD$-modules on the commutative variety and sheaves of modules for the noncommutative structure sheaf.
\end{abstract}
\maketitle
\tableofcontents

\section{Introduction}

Throughout $X$ is a smooth variety over $\C$ of dimension $n$. 
\begin{defn}
   Let $A$ be an associative $\C$-algebra. Define the lower central series filtration as follows. Set $L_1(A) = A$, and $L_k(A) = [A, L_{k-1}(A)]$. Then the \emph{lower central series ideals} are 
    \[ M_k(A) = AL_k(A)A = AL_k(A). \]
    We say that $A$ is \emph{NC-complete} if $A$ is complete for the filtration $M_k(A)$. The \emph{NC-completion} of an algebra is its completion with respect to the filtration $M_k$.
\end{defn}
The remarkable paper of Kapranov \cite{k} puts forward a framework of noncommutative geometry in which the local objects are NC-complete algebras with a smoothness property.
\begin{defn}
\label{defn:NC}
Let $A$ be a NC-complete $\C$-algebra, $\pi:A \to A_{ab}$ the abelianization map, and $x \in \spec A_{ab}$. Denote by $S = \pi^{-1}(\overline{S}) \subset A$ the multiplicative subset corresponding to a multiplicative subset $\overline{S} \subset A_{ab}$. 
\begin{enumerate}[1]
\item The \emph{stalk of $A$ at $x$} is the direct limit of all localizations $A[S^{-1}]$, where $\overline{S} \subset A_{ab}$ runs over multiplicative subsets of functions not vanishing at $x$.
\item An NC-complete algebra $A$ is \emph{NC-smooth of dimension $n$} if all of its completed stalks are isomorphic to $\Anh = k\lala x_1, \dots, x_n\rara$. 
\item An \emph{affine NC-manifold} is a pair $(\spec A_{ab}, A)$ of a smooth affine variety and NC-smooth algebra $A$ abelianizing to it.
\item An \emph{NC-manifold} is a smooth variety $X$ of dimension $n$ and a sheaf $\cA$ of associative $\C$-algebras which is locally an affine NC-manifold of dimension $n$.
\item An \emph{NC-thickening} of a smooth variety $X$ is a sheaf of algebras $\cA$ such that $(X, \cA)$ is an NC-manifold. NC-thickenings of $X$ form a category $NC-Th_X$.
\end{enumerate}
\end{defn}
\begin{rem}
NC-manifolds were first defined using a different filtration by commutator ideals; however, this filtration induces the same topology as $M_k$ (see claim 2.4 of \cite{jo}), and therefore the notion NC-complete is the same.
\end{rem}

\begin{example}
Affine examples are the (NC-completions of) the \emph{locally free algebras} in \cite{jo}:
\begin{enumerate}[1]
\item The completed free algebra $\Anh$ is an NC-thickening of the formal disk.
\item For any smooth complete intersection $X=\C[x_1,\ldots,x_{n+m}]/\langle f_1,\ldots, f_m\rangle$, the algebra $A=A_{n+m}/\langle\tilde{f}_1,\ldots, \tilde{f}_m\rangle$, where each $\tilde{f}_i \in f_i + M_2$, is locally free. Its NC-completion is an NC-thickening of $X$.
\item Let $A$ be locally free, and recall the notation of definition \ref{defn:NC}. If $\bar{S}$ is any multiplicative subset of $A/M_2$ without zero divisors, then $A[S^{-1}]$ is locally free.
\item Any formally smooth (sometimes called quasi-free) algebra is locally free, by the formal tubular neighborhood theorem (\cite{cq}, Section 6, Theorem 2).
\end{enumerate}
Section 5 of \cite{k} contains non-affine examples:
\begin{enumerate}[1]
\item There is an NC-thickening of projective space $\Ps^n$ given by gluing $n + 1$ copies of the NC-completion of $\C\langle x_1, \dots, x_n \rangle$.
\item NC-thickenings of Grassmannians and flag varieties arise as natural noncommutative variants of the classical functors of points.
\item Under certain conditions, Kapranov constructs an NC-thickening of the moduli of vector bundles on a projective variety.
\end{enumerate}
\end{example}

This paper builds on ideas of \cite{k}, \cite{jo}, and \cite{pt}, which had a unifying theme of studying NC-manifolds via commutative algebraic geometry on the abelianization. Kapranov constructs an NC-thickening of arbitrary smooth affine varieties $X$, which are unique but not functorial, so this does not give the existence of NC-thickenings of non-affine $X$. One goal of this paper is to give a new geometric criterion for the existence of NC-thickenings; we accomplish this by developing an analogue of formal algebraic geometry for NC-manifolds.

The \emph{bundle of coordinate systems $\cM$} of $X$ is defined by its fiber over $x \in X$ being the space of isomorphisms between the formal neighborhood of $x$ and the abstract formal disk $\spec \Onh = \spec \C \llcom x_1, \dots, x_n \rrcom$. It is a principal bundle for the pro-algebraic group $G_n^+$ of augmented algebra automorphisms of $\Onh$; we denote by $G_n$ the full group of algebra automorphisms. Formal geometry, the study of the bundle $\cM$, goes back to \cite{gk}, and has been used to great effect for example in \cite{fbz}, \cite{bd-ch}, \cite{bk}, \cite{g}, and \cite{k-rozansky}.

The bundle of coordinate systems carries many rich structures, encapsulating information about sheaves of jets and closely related to the $L_\infty$-spaces of \cite{c-w2}. Our goal is to reformulate questions about NC-manifolds in terms of analogous objects we call \emph{noncommutative coordinate systems}. These are principal bundles for $H_n^+ = \Aut_{aug} \Anh$, augmented automorphisms of the \emph{formal noncommutative disk}, arising from NC-thickenings roughly as frame bundles, and form a category $NC-Coord_X$.

A key aspect of classical formal geometry is the \emph{Gelfand-Kazhdan structure} on $\cM$. This is a splitting of the Atiyah sequence of $\cM$, valued in $\Lie G_n$, where $G_n$ is the full group of automorphisms of $\Onh$. As explained in \cite{k-rozansky}, this splitting carries within it the $L_\infty$ structure on $\cT_X[-1]$, as well as the flat connection on all jet bundles. In section \ref{sec:GK}, we discover a \emph{noncommutative Gelfand-Kazhdan structure} on any noncommutative coordinate system $\cN$.

\newtheorem*{thm:ThCoord}{Theorem \ref{thm:ThCoord}}
\begin{thm:ThCoord}
The functor $Coord:NC-Th_X \to NC-Coord_X$ is an equivalence of categories. The inverse is given by taking flat sections of $\Assoc_\cN(\Anh)$ under the NC-Gelfand Kazhdan structure on $\cN$.
\end{thm:ThCoord}
To a certain extent this perspective is visible in section 4.4 of \cite{k}, but the foundations and implications of noncommutative formal geometry are not systematically developed.

A recent paper \cite{pt} makes several remarkable observations about NC-thickenings of a given variety $X$. The authors define a notion of NC-connection:
\begin{defn}
\label{defn:NC-Conn}
An \emph{NC-connection} is a degree one, square zero derivation $D$ of 
\[ \cA_X := \Omega^\bullet_X \otimes_{\cO_X} \widehat{T}_{\cO_X}\Omega^1_X \]
extending the de Rham differential on $\Omega^\bullet_X$ and acting on $\Omega^1_X \subset \widehat{T}_{\cO_X} \Omega^1_X$ as 
\[ D(1 \otimes \alpha) = \alpha \otimes 1 + \nabla_1(\alpha) + \nabla_2(\alpha) + \cdots  \]
where $\nabla_i(\alpha) \in \Omega^1_X \otimes_{\cO_X} T^i(\Omega^1_X)$.

A \emph{twisted NC-connection} is a sheaf of algebras $\cJ$ and a sheaf of ideals $\cI$ such that $(\cJ, \cI)$ is locally isomorphic to $(\widehat{T}_{\cO_X} (\Omega^1_X), \widehat{T}_{\cO_X}^{\geq 1} (\Omega^1_X))$, equipped with an analogue of the derivation $D$ (see definition \ref{def:ncConn}).
\end{defn}
NC-thickenings are shown to be equivalent to NC-connections:
\newtheorem*{thm:PTCoord}{Theorem 2.3.23 in \cite{pt}}
\begin{thm:PTCoord}
Given a twisted NC-connection with derivation $D$, $\ker D$ is an NC-thickening of $X$, and this induces an equivalence of categories $NC-Conn_X \stackrel{\sim}{\to} NC-Th_X$.
\end{thm:PTCoord}
Second, the authors construct a functor from the category $\cD(X)$ of $\cD$-modules on $X$ to modules over $\cA$ for any NC-thickening $\cA$ of $X$. We explain how both the notion of NC-connection and the functor from $\cD(X)$ arises naturally from the perspective of NC-coordinate systems:
\newtheorem*{thm:ConnCoord}{Theorem \ref{thm:ConnCoord}}
\begin{thm:ConnCoord}
Let $\cA$ be an NC-thickening of $X$, and $\cN$ an NC-coordinate system. Then the NC-Gelfand-Kazhdan structure on $\cN$ induces a twisted NC-connection on $\Assoc_\cN \Anh$; this construction induces an equivalence of categories $NC-Coord_X \stackrel{\sim}{\to} NC-Conn_X$.
\end{thm:ConnCoord}
This is fully analogous to the commutative situation, where an $L_\infty$-algebra structure on $\cT_X[-1]$ arises from the geometry of the bundle of coordinate systems \cite{k}; this analogy is discussed further at the end of the introduction.

Let $X_{dR}$ denote the de Rham stack of $X$ \cite{simpson}; recall that $\QC(X_{dR}) = \cD(X)$. The classical Gelfand-Kazhdan structure descends the map $X \to BG_n^+$ classifying $\cM$ to a map $X_{dR} \to BG_n$. The appearance of the de Rham stack in noncommutative formal geometry gives the following relation between $\cD(X)$ and modules over NC-thickenings $\cA$.
\newtheorem*{prop:Dmod}{Proposition \ref{prop:Dmod} (Informal)}
\begin{prop:Dmod}
The (commutative) bundle of coordinate systems $\cM$ and an NC-coordinate system $\cN$ induce a commutative diagram 
\[ 
\xymatrix{
& X_{dR} \ar_p[dl] \ar^q[dr] & \\
BH_n \ar[rr] & & BG_n 
}
\]
which naturally gives functors 
\[ \ind_q\left(\QC(BG_n)\right) \cong \cD(X) \to \ind_p\left(\Anh\Mod(\QC(BH_n))\right) \to \cA\Mod. \] 
\end{prop:Dmod}

Note that, in particular, $\cN$ is a reduction of structure group $\cM$ from $G_n^+$ to $H_n^+$. This perspective has a number of applications. For example, we give a new description of the existence of NC-thickenings of a variety $X$.
\newtheorem*{cor:reduction}{Corollary \ref{cor:reduction}}
\begin{cor:reduction}
A smooth variety $X$ admits an NC-thickening if and only if $\Assoc_\cM(G_n^+/H_n^+)$ has global sections. 
\end{cor:reduction}
Note that the quotient $G_n^+/H_n^+$ is taken as an algebraic stack, and thus $\Assoc_\cM(G_n/H_n)$ is a sheaf of stacks over $X$. The perspective of NC-coordinate systems also provides a globalization of local statements about $H_n$. In the same way that representations of $G_n$ give via formal geometry a universal construction of natural sheaves on smooth varieties, the representation theory of $H_n$ is a source for natural sheaves on NC-manifolds.

Finally, we study how NC-coordinate systems degenerate to the associated graded along the lower central series filtration and give a categorification of the main result of \cite{jo}. 
In particular, the associated graded algebra $gr_{M_k}(\Anh)$ carries an action of $G_n$. Thus $gr_{M_k}(\Anh)\Mod$ is a $G_n$-category. The main result of \cite{jo} is that if $X = \spec A_{ab}$ is affine with NC-thickening $A$, then 
\[ gr_{M_k}(A) \cong \Gamma(X, \Assoc_{\cM}(gr_{M_k}(\Anh))^\nabla), \]
where $(\--)^\nabla$ denotes the sheaf of flat sections. We now state an informal analogue of this statement for categories of modules.
\newtheorem*{thm:LCS-Cat}{Theorem \ref{thm:LCS-Cat} (Informal)}
\begin{thm:LCS-Cat}
    There is an equivalence of categories 
    \[ gr_M(A)\Mod(\QC(X)) \stackrel{\sim}{\to} \Gamma(X, \Assoc_\cM(gr_M(\Anh)\Mod)^\nabla).\]
\end{thm:LCS-Cat}

We conclude the introduction with an explanation of how this paper and \cite{pt} relate to Kapranov's seminal paper \cite{k-rozansky}, as well as how this paper relates more broadly to noncommutative geometry and Koszul duality. Among other things, Kapranov constructed a (cochain level) equivalence between $\cJ \cO_X$ (with its natural flat connection) and $\widehat{Sym}_\cO \Omega^1$, with structure maps dual to an $L_\infty$-structure on $\cT_X[-1]$. Equivalently, Kapranov constructed an isomorphism between a natural enhancement of the formal neighborhood of the diagonal in $X \times X$ and a natural enhancement of the formal neighborhood of the zero section in the tangent bundle. The $L_\infty$-structure maps on $\cT_X[-1]$ were constructed via the geometry of the bundle of coordinate systems on $X$.

Analogously, in \cite{pt} the authors recover $\cA$ (the ``structure sheaf'' of a NC-manifold) from the kernel of a collection of structure maps on $\widehat{T}_\cO \Omega^1$. This paper shows how to construct these structure maps from the bundle of NC-coordinate systems associated with $\cA$. The interpretation of these structure maps in an analogous way as a Koszul dual homotopy algebra structure will be the subject of future work.

From the perspective of Koszul duality, this paper can be viewed as the case of ``manifolds over the associative operad'' in the following philosophy: the structure maps of the Koszul dual split the Atiyah sequence of the bundle of coordinate systems. Kapranov's result explained above is the commutative version (where the Koszul dual is the $L_\infty$-algebra $\cT_X[-1]$); here we expect to interpret the splitting maps for noncommutative manifolds in terms of the Koszul dual $A_\infty$ algebra. A version of this is already visible in sections 5.3, 5.4 and 7.1 of \cite{pt}, in an analytic setting. This suggests a deep connection between Gelfand-Kazhdan structures and Koszul duality of operads.

\subsection{Acknowledgements}

The author would like to thank David Ben-Zvi and David Jordan for many helpful conversations about formal geometry and noncommutative algebra, and Alexander Polishchuk, Travis Schedler, and Michel Van den Bergh for remarks on earlier drafts of this paper. The author thanks the NSF for support via DMS 1148490 Research and Training Grant and DMS 1103525.

\section{Preliminaries}

This section collects definitions and facts from the theories of formal algebraic geometry and noncommutative manifolds which we will use.

\subsection{Definitions and Notation} Throughout $X$ is a smooth variety of dimension $n$ over $\C$.

\begin{defn}
We define the local objects of study in this paper.
\begin{enumerate}[1]
\item Let $\Anh = \C\lala x_1, \dots, x_n \rara$ denote the completed free associative algebra on $n$ generators.
\item Let $\Onh = \C\llcom x_1, \dots, x_n \rrcom$ denote the completed free commutative algebra on $n$ generators.
\item Let $H_n$ (resp. $H_n^+$) denote the group of algebra automorphisms (respectively augmented $\C$-algebra automorphisms) of $\Anh$.
\item Let $\h_n$ (resp. $\h_n^+$) denote the Lie algebra of $H_n$ (resp. $H_n^+$). 
\item Let $G_n$ (resp. $G_n^+$) denote the group of algebra automorphisms (respectively augmented $\C$-algebra automorphisms) of $\Onh$.
\item Let $\g_n$ (resp. $\g_n^+$) denote the Lie algebra of $G_n$ (resp. $G_n^+$). Concretely, this is the Lie algebra of vector fields on the formal disk (resp. those vector fields vanishing at the origin).
\end{enumerate}
\end{defn}

\begin{defn}
The \emph{de Rham stack} $X_{dR}$ associated to $X$ is given on affine schemes $S$ by $X_{dR}(S) = X(S^{red})$, where $S^{red}$ is the reduced subscheme of $S$.
\end{defn}

\begin{defn}
Let $\cF$ be a sheaf on $X$ with flat connection $\nabla$. We denote sheaf of flat sections $\left( \cF \right)^\nabla$.
\end{defn}

\begin{defn}
Let $U \subset X$ be open. Define $NC-Th_U$ to be the category whose objects are NC-manifolds $\cA$ with abelianization $\cO_U$, and morphisms given by morphisms of algebras identical on $\cO_U$.
\end{defn}

\begin{prop}[\cite{k} 4.3.1]
The assignment $U \mapsto NC-Th_U$ is a stack of groupoids locally trivial in the Zariski topology.
\end{prop}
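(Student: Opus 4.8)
**Proof proposal for Proposition (\cite{k} 4.3.1): $U \mapsto NC\text{-}Th_U$ is a stack of groupoids, locally trivial in the Zariski topology.**

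The plan is to verify the two assertions separately: first that $NC\text{-}Th$ is a stack (descent for objects and morphisms along Zariski covers), and second that it is locally trivial, meaning every point has a Zariski neighborhood $U$ over which $NC\text{-}Th_U$ is equivalent to a one-object groupoid (equivalently, admits a global object). For the stack property, the key observation is that an NC-thickening of $U$ is by Definition \ref{defn:NC} a \emph{sheaf} of associative algebras on $U$ that is locally an affine NC-manifold; since the conditions ``abelianizes to $\cO_U$'', ``NC-complete'', and ``NC-smooth of dimension $n$'' (i.e.\ all completed stalks $\cong\Anh$) are all local on $U$, a compatible family of NC-thickenings on a cover $\{U_i\}$ glues to a sheaf of algebras on $U$ satisfying these same conditions. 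Here one uses that morphisms in $NC\text{-}Th_U$ are required to be the identity on $\cO_U$, which rigidifies the gluing: on overlaps the transition data are algebra isomorphisms fixing the abelianization, and the cocycle condition is exactly descent data for a sheaf. So the first step is to spell out that $\cHom$ of NC-thickenings (morphisms of algebras that are the identity on $\cO_U$) is a sheaf on $U$ — this gives descent for morphisms — and then that an effective descent datum produces a sheaf of algebras whose stalks and abelianization are computed locally, hence is again an object of $NC\text{-}Th_U$. This part is essentially formal once one records that all the defining properties are Zariski-local and that "identity on $\cO_U$" is preserved under gluing.

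For local triviality, the input is Kapranov's existence theorem for NC-thickenings of smooth \emph{affine} varieties \cite{k}: every smooth affine $V$ admits an NC-thickening, unique up to (non-canonical) isomorphism. Thus for any $x\in X$, choose an affine open $U\ni x$; then $NC\text{-}Th_U$ is nonempty. Since $NC\text{-}Th_U$ is a groupoid — by the uniqueness-up-to-isomorphism clause, any two NC-thickenings of a smooth affine $U$ are isomorphic, and every morphism of NC-thickenings is invertible because it is an isomorphism modulo each $M_k$ and hence an isomorphism after passing to the complete filtration — a nonempty groupoid is connected, so $NC\text{-}Th_U$ is equivalent to a group (the automorphism group of any object fixing $\cO_U$). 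This is precisely the statement that the stack is locally trivial: over the affine cover, it restricts to a $BH$-type gerbe for the appropriate automorphism sheaf. One should be slightly careful to note that morphisms being invertible uses NC-completeness: a filtered algebra map that is bijective on each associated graded piece $M_k/M_{k+1}$, or bijective on each quotient $A/M_k$, is an isomorphism of complete algebras.

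The main obstacle — or at least the point requiring the most care — is showing that a morphism of NC-thickenings is automatically an isomorphism, i.e.\ that $NC\text{-}Th_U$ really is a groupoid rather than merely a category; this is what licenses the passage from ``nonempty'' to ``one-object up to equivalence''. The argument is: a morphism $\phi:\cA\to\cB$ fixing $\cO_U$ induces the identity on abelianizations, hence the identity on $\cA/M_2\cong\cO_U\cong\cB/M_2$, and since $M_k$ is generated (as a two-sided ideal) in a way compatible with $\phi$, one shows by induction on $k$ that $\phi$ is an isomorphism modulo $M_k$ for every $k$ — the inductive step reduces to surjectivity on $M_{k-1}/M_k$, which follows because $M_{k-1}/M_k$ is generated over $\cO_U$ by iterated commutators of lifts of coordinate functions, and $\phi$ matches these. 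Taking the inverse limit over $k$ and using NC-completeness of both $\cA$ and $\cB$ yields that $\phi$ is an isomorphism. A secondary technical point is checking that descent data for sheaves of \emph{noncomplete} presentations can be completed compatibly with gluing; this is handled by noting $M_k$ is functorial and localization commutes with the relevant completions for locally free algebras (cf.\ the Example following Definition \ref{defn:NC}), so completion may be performed either before or after gluing with the same result. With these two points settled, both the stack axioms and local triviality follow, completing the proof.
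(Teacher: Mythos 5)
The paper offers no proof of this proposition: it is imported verbatim from Kapranov (\cite{k}, Proposition 4.3.1), so there is nothing internal to compare your argument against, and your reconstruction follows the same route Kapranov's own proof does. The structure is sound: descent holds because the defining conditions (sheaf of algebras, abelianization equal to $\cO_U$, completed stalks isomorphic to $\Anh$) are Zariski-local and morphisms form a sheaf; local triviality comes from the affine existence-and-uniqueness theorem; and the groupoid property comes from automatic invertibility of morphisms over the identity of $\cO_U$. Two small points. First, the sentence ``a nonempty groupoid is connected'' is false as stated --- what you actually need (and have already supplied) is that uniqueness up to isomorphism over affine $U$ makes the groupoid connected; nonemptiness alone does not. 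Second, your induction for invertibility leans on the assertion that $M_{k-1}/M_k$ is generated over $\cO_U$ by iterated commutators of lifts of coordinate functions, which is a nontrivial structural fact about $gr_M$ of NC-smooth algebras and deserves either a citation or a workaround; a cleaner, self-contained argument is to pass to completed stalks, where the morphism becomes an endomorphism of $\Anh$ inducing the identity on $\Onh \cong \Anh/M_2$, hence sending each generator $x_i$ to $x_i$ plus terms in $M_2$, and such an endomorphism is invertible by the usual degree-by-degree construction of the inverse; an isomorphism on all stalks together with the sheaf property then gives an isomorphism of sheaves. Neither point affects the correctness of the overall plan.
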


\begin{defn}[\cite{pt}, 2.3.22]
  \label{def:ncConn}
  We define the category $NC-Conn_X$ of \emph{twisted NC-connections on $X$} as follows. Objects are tuples $(\cT, \cJ, \varphi, D)$ where $\cT$ is a sheaf of $\cO_X$-algebras, $\cJ \subset \cT$ is a sheaf of two-sided ideals Zariski locally isomorphic to the pair $\left(\widehat{T}_{\cO_X} \Omega^1_X, \widehat{T}_{\cO_X}^{\geq 1} \Omega^1_X\right)$, $\varphi$ is an isomorphism of graded algebras 
  \[
  \varphi = \left( \oplus \varphi_k \right):\bigoplus_{k \geq 0} \cJ^k/\cJ^{k+1} \stackrel{\sim}{\longrightarrow} T_{\cO_X} \Omega^1_X,
  \]
  and $D$ is a differential of degree 1 on $\Omega_X^\bullet \otimes_{\cO_X} \cT$ such that $D$ restricts to the de Rham differential on $\Omega_X^\bullet$ and the diagram 
  \[
\xymatrix{
\cJ \ar@{^{(}->}[r] \ar@/_/@{->>}[drr]& \cT \ar^-D[r] & \Omega^1_X \otimes_{\cO_X} \cT \ar@{->>}[r] & \Omega^1_X \otimes_{\cO_X} \cT/\cJ \ar^-\sim[r] &  \Omega^1_X \\
& & \cJ/\cJ^2 \ar@/_/_-{\varphi_1}^-\sim[urr] & & 
}
  \]
  commutes.
  
  A morphism between $(\cT, \cJ, \varphi, D)$ and $(\cT', \cJ', \varphi', D')$ is a morphism $\varphi:\cT \to \cT'$ of $\cO_X$-algebras sending $\cJ$ to $\cJ'$ which induces the identity on $T_{\cO_X} \Omega^1_X$ under $\varphi$ and $\varphi'$, and such that 
  \[ id \otimes f: \Omega_X^\bullet \otimes_{\cO_X} \cT \to \Omega_X^\bullet \otimes_{\cO_X} \cT' \]
  is compatible with $D$ and $D'$.
\end{defn}

One of the main results of \cite{pt}, theorem 2.3.23 in loc. cit., is the following 
\begin{thm}
  Let $(\cT, \cJ, \varphi, D)$ be a twisted NC-connection on $X$. Then $\ker D \subset \cT= \left( \Omega_X^\bullet \otimes_{\cO_X} \cT \right)^0$ is an NC-thickening of $X$, and this extends to an equivalence of categories 
  \[ NC-Conn_X \stackrel{\sim}{\to} NC-Th_X.\]
\end{thm}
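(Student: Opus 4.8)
The plan is to establish the equivalence $NC\text{-}Conn_X \xrightarrow{\sim} NC\text{-}Th_X$ by constructing an explicit quasi-inverse and checking that the two composites are naturally isomorphic to the identity. Since the statement is local in the Zariski topology on $X$ (both $NC\text{-}Th$ and $NC\text{-}Conn$ are stacks by \cite{k} 4.3.1 and the local triviality built into Definition \ref{def:ncConn}), I would first reduce to the affine case $X = \spec R$ and even, after passing to completed stalks, to the formal disk where $\cT$ is literally $\Anh$ and $\cJ$ is the augmentation ideal; the point of the twisted version is precisely to glue these local models. First I would take $(\cT,\cJ,\varphi,D)$ and set $\cA := \ker D \subset \cT$. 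One must check (i) $\cA$ is a sheaf of $\cO_X$-subalgebras — immediate since $D$ is a derivation and restricts to the de Rham differential, so $\cO_X \subset \cA$; (ii) the abelianization $\cA_{ab} \cong \cO_X$ — this uses the commuting diagram in Definition \ref{def:ncConn}, which forces $D$ modulo $\cJ^2$ to be, in coordinates, $\xi \mapsto d\xi$ on $\cJ/\cJ^2 \cong \Omega^1_X$, so the induced map $\cO_X \to \cA/[\cA,\cA]\cdot\cA$ is an isomorphism; (iii) $\cA$ is NC-smooth, i.e. its completed stalks are $\Anh$ — this is the local computation on the formal disk, where $D$ is a flat connection on $\widehat{T}_{\cO}\Omega^1$ and $\ker D$ of such a connection on the pro-free algebra is free of the same rank, by a standard "flat sections trivialize" argument (lift a basis of horizontal sections of $\cJ/\cJ^2$, which exist since $D$ is flat, and check they freely generate).

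The next step is to go the other way: given an NC-thickening $\cA$, produce a twisted NC-connection. Locally $\cA$ has completed stalks $\Anh$; the associated graded of $\cA$ along its own $M_k$-filtration is $\widehat{T}_{\cO_X}\Omega^1_X$ (this identification of $gr$ is exactly the content underlying the definition, cf. \cite{jo} and the $gr_{M_k}$ discussion), which supplies the pair $(\cJ,\varphi)$. For the differential $D$: on $\Omega^\bullet_X \otimes_{\cO_X}\cT$ one wants a flat connection whose horizontal sections are $\cA$; such a $D$ exists because, Zariski-locally, $\cA$ sits inside its NC-completed-stalk model and Kapranov's formal-disk picture (the NC analogue of Gelfand–Kazhdan, i.e. \S4.4 of \cite{k}) provides a canonical flat connection on the sheaf of NC-jets with $\cA$ as flat sections. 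The real content is that this locally-defined $D$ glues: two local trivializations of $\cT \cong \widehat{T}_{\cO}\Omega^1$ differ by an automorphism over $\cO_X$, and one checks the flat connection is independent of the trivialization up to the coherence required by the morphism condition in Definition \ref{def:ncConn}. I would phrase this as: the connection is characterized by $D|_{\cA} = d_{dR} \otimes 1$ together with the Leibniz rule and continuity, which is manifestly intrinsic, hence glues.

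Finally one checks the two round-trips. Starting from $(\cT,\cJ,\varphi,D)$, form $\cA = \ker D$, then reconstruct: the reconstructed $\cT'$ is the NC-jet algebra of $\cA$, and there is a canonical map $\cT' \to \cT$ (expand a section of $\cT$ in "Taylor coefficients" against the flat connection) which is an isomorphism because it is so on the formal disk — this is again the NC Gelfand–Kazhdan trivialization — and it intertwines $D'$ with $D$ and $\varphi'$ with $\varphi$ by construction. Conversely, starting from $\cA$, building $(\cT,\cJ,\varphi,D)$, and taking $\ker D$ returns $\cA$ because the flat connection was built to have $\cA$ as its horizontal sections; naturality in morphisms is then a diagram chase using that morphisms on both sides are required to be the identity on $\cO_X$ (resp. on $T_{\cO_X}\Omega^1_X$) and compatible with the differentials.

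\textbf{Main obstacle.} The delicate point is the gluing in the $\cA \rightsquigarrow (\cT,\cJ,\varphi,D)$ direction — equivalently, showing the locally-defined flat connection $D$ is canonical. This is where one genuinely needs the noncommutative Gelfand–Kazhdan structure (the subject of \S\ref{sec:GK}): a priori $D$ depends on the chosen local isomorphism of $\cA$'s stalk with $\Anh$, and only the flatness plus the intrinsic characterization $D|_\cA = d \otimes 1$ removes this dependence. I expect the cleanest route is to first establish the NC coordinate-bundle formalism (so that $\cA$ determines a reduction of structure group and a canonical flat connection, as in Theorems \ref{thm:ThCoord} and \ref{thm:ConnCoord}) and then read off $D$ from that; but for a self-contained proof one can instead verify directly that the transition automorphisms of $\cT$ are horizontal for the candidate connections, which is a finite-order check at each stage of the $M_k$-filtration.
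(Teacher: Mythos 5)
First, a point of orientation: the paper does not prove this statement at all --- it is quoted as Theorem 2.3.23 of \cite{pt} and used as an imported input, so there is no internal proof to measure you against. The closest internal analogue is the chain of equivalences built in Theorems \ref{thm:ThCoord} and \ref{thm:ConnCoord}, which recovers the same equivalence by factoring through $NC-Coord_X$: an NC-thickening $\cA$ gives the jet algebra $\JNC \cA$ with its noncommutative Gelfand--Kazhdan flat connection, and $\ker D$ (equivalently, flat sections) recovers $\cA$. Your proposal is, in outline, exactly this route; your ``main obstacle'' paragraph correctly identifies that the canonicity of $D$ is the content of the NC-Gelfand--Kazhdan structure and that the clean proof goes through the coordinate bundle. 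The genuinely different proof is the one in \cite{pt} itself, which is a direct induction/torsor argument over automorphisms of $\widehat{T}_{\cO_X}\Omega^1_X$ and never mentions coordinate bundles; a self-contained argument would need to reproduce that induction rather than appeal to \S 4.4 of \cite{k}.

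Second, one concrete error. In the direction $\cA \rightsquigarrow (\cT,\cJ,\varphi,D)$ you assert that ``the associated graded of $\cA$ along its own $M_k$-filtration is $\widehat{T}_{\cO_X}\Omega^1_X$'' and that this supplies $(\cJ,\varphi)$. This confuses two filtrations. The lower central series associated graded $gr_M(\cA)$ is a different and much more complicated object (it is the subject of \cite{jo} and of Section 6 of this paper; already $M_2/M_3$ is not $\Omega^1_X \otimes \Omega^1_X$). The tensor algebra arises instead as the associated graded of the jet algebra $\cT = \JNC \cA$ along the $\cJ$-adic filtration by powers of the augmentation ideal, $\bigoplus_{k} \cJ^k/\cJ^{k+1} \cong T_{\cO_X}\Omega^1_X$, which is exactly the $\varphi$ demanded by Definition \ref{def:ncConn}. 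Your later sentences show you intend $\cT$ to be the NC-jet sheaf, so the architecture survives, but as written the construction of $(\cT,\cJ,\varphi)$ is wrong. Relatedly, in the other direction your ``flat sections trivialize'' step --- that $\ker D$ has completed stalks $\Anh$ and that the kernel of $\ker D \to \cO_X$ is precisely the commutator ideal --- is where the actual work of \cite{pt} lives; it is a degree-by-degree induction along the $\cJ$-adic filtration, not a formal consequence of flatness, and the sketch should at least name that induction.
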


\subsection{Recollections on Formal Geometry}
\label{sec:recallFG}

Our (greatly abridged) treatment has been heavily influenced by \cite{bk} and \cite{fbz}, but most closely follows \cite{g}; details can also be found in \cite{jo}. 

Every smooth variety $X$ of dimension $n$ has a canonical $G_n^+$-torsor whose fiber at $x \in X$ is the set of isomorphisms between the formal neighborhood of $x$ and $\spec \Onh$. This bundle has the structure of a scheme of infinite type over $X$ (see \cite{fbz} for the scheme structure), and we denote it by $\cM$. The bundle $\cM$ is locally trivial over $X$ in the Zariski topology.

Notice that $\cM/G_n^+ \cong X$. However, sections of $\cM$ admit an action of the larger group $G_n$; this is known classically as the \emph{Gelfand-Kazhdan structure} on $\cM$, or the \emph{Harish-Chandra connection} in the language of \cite{jo}; see section \ref{sec:GK} for further discussion. Section 3 of \cite{g} contains the following results.
\begin{prop}
  \label{prop:gaitsgoryFG}
We have an isomorphism $\cM/G_n \to X_{dR}$. Moreover, the maps $X \to BG_n^+$ and $X_{dR} \to BG_n$ classifying $\cM$ over $X$ (resp. over $X_{dR}$) fit into a Cartesian diagram 
\[
\xymatrix{
X \ar[r] \ar^p[d] & X_{dR} \ar^q[d] \\
BG_n^+ \ar[r] & BG_n
}
\]
\end{prop}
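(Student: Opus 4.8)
The plan is to prove the two claims — the identification $\cM/G_n \cong X_{dR}$ and the Cartesian square — more or less together, since the square is essentially a restatement of the quotient description packaged with the already-known fact that $\cM/G_n^+ \cong X$. I would begin by recalling the local picture: over a Zariski chart $U$ on which $X$ is étale over $\mathbb A^n$, the torsor $\cM|_U$ trivializes, and the fiber of $\cM$ at a point $x$ is the scheme of continuous algebra isomorphisms $\widehat{\cO}_{X,x} \stackrel{\sim}{\to} \Onh$. The key local observation is that $G_n$, as opposed to $G_n^+$, no longer remembers the basepoint: an arbitrary (non-augmented) automorphism of $\Onh$ is the same data as a choice of new formal coordinate \emph{system} together with a choice of where to center it, i.e. $G_n \cong G_n^+ \ltimes \widehat{\mathbb D}$ where $\widehat{\mathbb D} = \spec \Onh$ acts by translations. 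Hence $\cM/G_n$ should be thought of as "formal neighborhoods in $X$, up to reparametrization and forgetting the center."

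Concretely, I would first establish $\cM \times_{G_n^+} G_n \cong \cM \times_X (\text{formal nbhd of diagonal})$, or more precisely show that the $G_n$-torsor obtained by inducing $\cM$ up along $G_n^+ \into G_n$ has total space the scheme $\widehat{X \times X}$ of pairs $(x, \widehat{y})$ where $\widehat y$ ranges over the formal neighborhood of $x$, with $G_n$ acting on the second factor via its identification with coordinatizations-with-center of $\widehat{\mathbb D}$. Then $\cM/G_n = (\cM \times_{G_n^+} G_n)/G_n$ is the quotient of $\widehat{X\times X}$ by the equivalence relation "having the same first coordinate after forgetting parametrization," which is exactly the formal-completion equivalence relation presenting $X_{dR}$; indeed $X_{dR}$ is the quotient of $X$ by the formal groupoid $\widehat{X\times X} \rightrightarrows X$, and one checks the two source/target maps match. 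This is where I would cite or reprove the standard fact (Gaitsgory, Simpson) that $X_{dR} \cong X/\widehat{X\times X}$, and the only real work is matching the groupoid structures — associativity and the identity section correspond to composition and the identity in $G_n$.

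For the Cartesian square: the maps $X \to BG_n^+$ and $X_{dR}\to BG_n$ classify $\cM$ and its $G_n$-induction respectively, and the square commutes by construction since $p$ is induced by $G_n^+ \into G_n$ and the top map is the canonical $X \to X_{dR}$. To see it is Cartesian, compute the fiber product $X_{dR} \times_{BG_n} BG_n^+$: a point is a point of $X_{dR}$ together with a reduction to $G_n^+$ of the induced $G_n$-torsor, i.e. a point of $X_{dR}$ plus a lift of the $\widehat{\mathbb D}$-worth of translation ambiguity — equivalently a choice of basepoint compatible with the formal-neighborhood data, which is precisely a point of $X$. So the natural map $X \to X_{dR}\times_{BG_n} BG_n^+$ is an isomorphism; equivalently, $BG_n^+ \to BG_n$ has fiber $G_n/G_n^+ \cong \widehat{\mathbb D}$, and pulling back along $X_{dR}\to BG_n$ gives a bundle of formal disks over $X_{dR}$ whose total space is $X$.

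The main obstacle I anticipate is purely foundational bookkeeping rather than a conceptual difficulty: one must handle the infinite-type (pro-scheme) structures and the non-reduced test schemes carefully so that "$\cM/G_n$" and the quotient presentation of $X_{dR}$ are compared as the same (stacky) functor of points, and in particular one must be honest that $G_n$ is a group ind-scheme / pro-group whose quotients need the correct (fppf or pro-) topology for the torsor and descent statements to hold. Once the functor-of-points descriptions are pinned down on affine test schemes $S$ — a point of $\cM/G_n$ over $S$ being an $S$-point of $X_{dR}$, i.e. an $S^{red}$-point of $X$ — both assertions follow formally. $\square$
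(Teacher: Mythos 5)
The paper does not actually prove this proposition: it is quoted, without argument, from Section~3 of Gaitsgory's notes (the sentence immediately preceding it reads ``Section 3 of \cite{g} contains the following results''), so there is no internal proof to compare against. Your argument is essentially the standard one from that source, and its skeleton is sound: induce $\cM$ up along $G_n^+ \subset G_n$, identify the result with the formal neighborhood of the diagonal, invoke the groupoid presentation $X_{dR} \cong X/\widehat{X\times X}$, and get the Cartesian square from the identification of the fiber of $BG_n^+ \to BG_n$ with $G_n/G_n^+ \cong \spec\Onh$ pulled back to the formal-disk bundle $X \to X_{dR}$.

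Two cautions. First, the claim $G_n \cong G_n^+ \ltimes \spec\Onh$ is false as stated: translations and $G_n^+$ do intersect trivially and generate $G_n$, but neither subgroup is normal (conjugating an augmented automorphism by a translation does not preserve augmentation, and conjugating a translation by a general automorphism is not a translation), so there is no semidirect product. Fortunately you only ever use the homogeneous-space statement $G_n/G_n^+ \cong \spec\Onh$, which is correct (transitivity of the action on the disk plus the stabilizer computation, exactly as in the paper's Proposition 3.2 for $H_n$), so this slip is not load-bearing. Second, the identification $\cM \times_{G_n^+} G_n \cong \cM \times_X \widehat{X\times X}$ is stated loosely; the clean statement is that the induced $G_n$-torsor maps to $\widehat{X\times X}$ by recording the image of the origin, exhibiting it as the pullback of $\cM$ along the \emph{second} projection $\widehat{X\times X} \to X$, and the quotient by $G_n$ then matches the formal groupoid presenting $X_{dR}$. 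With those repairs, and granting the foundational points you correctly flag about pro-groups and non-reduced test schemes, the proof is the intended one.
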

This diagram allows us to construct many natural sheaves on $X$ (resp. $X_{dR}$) from modules for $G_n^+$ (resp. $G_n$). For example, pulling back the $G_n$-module $\Onh$ gives the sheaf $\cJ \cO$ of jets of functions; other jet sheaves arise similarly. Additionally, an algebra object $A$ of $\QC(BG_n^+)$ (resp. $\QC(BG_n)$) pulls back to an algebra object $p^* A$ of $\QC(X)$ (resp. $\QC(X_{dR})$).

\section{Noncommutative Formal Geometry}

We develop analogues of fundamental results about the bundle $\cM$ and related structures recalled in section \ref{sec:recallFG} with $\Onh = k\llcom x_1, ..., x_n \rrcom$ replaced by $\Anh = k\lala x_1, ..., x_n \rara$. This leads us to the notion of a bundle of NC-coordinate systems.

\subsection{Automorphisms of the Noncommutative Disk}
\label{sec:ncDisk}

The surprisingly strong parallel between commutative and noncommutative formal geometry arises from the following local statements. 

\begin{prop}
    \label{prop:LieQuot}
We have a diagram 
\[
\xymatrix{
 H_n^+ \ar[d] \ar[r] & H_n \ar[d] \ar[r] & H_n/H_n^+ \ar^\wr[d]  \\
 G_n^+ \ar[r] & G_n \ar[r] & G_n/G_n^+ 
}
\]
inducing an isomorphism of the quotients.
\end{prop}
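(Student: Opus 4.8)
The plan is to compute the quotient $H_n/H_n^+$ explicitly and identify it with $G_n/G_n^+$ via the abelianization map. First I would observe that an augmented automorphism of $\Anh$ is determined by where it sends the generators $x_1, \dots, x_n$, and that a general (not necessarily augmented) automorphism $\phi \in H_n$ sends $x_i$ to a power series with a constant term $c_i \in \C$ plus higher-order terms. The key structural fact is that $H_n$ should be a semidirect product (or at least fit into a split extension) $H_n^+ \rtimes (\text{translations})$, where the ``translations'' are the automorphisms $x_i \mapsto x_i + c_i$; crucially, since the variables are being completed with respect to the $M_k$-filtration and constants are central, the translation $x_i \mapsto x_i + c_i$ is a well-defined algebra automorphism of $\Anh$ just as in the commutative case. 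Thus $H_n/H_n^+ \cong \C^n$ (as a scheme), with the isomorphism recording the constant terms $(c_1, \dots, c_n)$, and identically $G_n/G_n^+ \cong \C^n$ recording the constant terms of an automorphism of $\Onh$. The abelianization map $\Anh \to \Onh$ sends $x_i \mapsto x_i$ and is compatible with translations, so the induced map $H_n/H_n^+ \to G_n/G_n^+$ is the identity on $\C^n$, hence an isomorphism.

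The key steps, in order: (1) verify that every $\phi \in H_n$ is a composition of a unique translation by $(c_1, \dots, c_n) \in \C^n$ and a unique augmented automorphism — i.e., set $c_i = \phi(x_i) \bmod M_2$ (the constant term), check that the translation by $-c_i$ followed by $\phi$ lands in $H_n^+$, and that this expresses $\phi$ uniquely; (2) conclude $H_n \cong H_n^+ \rtimes \C^n$ and likewise $G_n \cong G_n^+ \rtimes \C^n$, so both quotients are canonically $\C^n$ (this needs to be done at the level of functor-of-points / group schemes, not just sets, but the argument is the same parametrized version); (3) check that the vertical maps in the diagram — the abelianization-induced homomorphisms $H_n^+ \to G_n^+$ and $H_n \to G_n$ — are compatible with these decompositions, i.e., they act as the identity on the $\C^n$ translation factor; (4) deduce the induced map on quotients is an isomorphism. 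The commutativity of the left square of the diagram is essentially the statement that abelianization is functorial, which is immediate.

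The main obstacle I expect is a subtle point in step (1): I need to know that the translation automorphisms genuinely exist as automorphisms of the \emph{completed} algebra $\Anh$, and that subtracting off the constant term of $\phi(x_i)$ is a legitimate operation compatible with the topology defined by $M_k$. Since $M_k(\Anh)$ is generated by $k$-fold commutators and constants are central (so contribute nothing to commutators), the filtration $M_k$ on $\Anh$ restricts compatibly under translation, and a translation is manifestly continuous and invertible; so this should go through, but it is the place where one genuinely uses that we are in the NC-complete setting rather than, say, the full free algebra with its adic topology. A secondary point worth stating carefully is that the quotient $H_n/H_n^+$ is being formed as a group scheme (or fppf/étale sheaf quotient): because the extension is split with abelian kernel-complement $\C^n$, the quotient is represented by the affine space $\C^n$ with no stackiness, and the same holds for $G_n/G_n^+$, so the vertical arrow on the right is an honest isomorphism of schemes (indeed of group schemes).
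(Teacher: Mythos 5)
There is a genuine gap, and it sits exactly at the point you flagged as ``the main obstacle'' and then dismissed. The translation $x_i \mapsto x_i + c_i$ for an arbitrary constant $c_i \in \C$ is \emph{not} an automorphism of $\Anh$, nor of $\Onh$: these are completed algebras (the completed stalks of an NC-manifold, abelianizing to $\C\llcom x_1,\dots,x_n\rrcom$), and substituting $x_i + c_i$ into a formal series such as $\sum_k x_1^k$ produces a ``constant term'' $\sum_k c_1^k$ that is not defined. The obstruction is not the $M_k$-filtration (you are right that constants are invisible to commutators) but the adic completion in the commutative direction, which is present in $\Anh$ just as in $\Onh$. Indeed, a literal algebra automorphism of either completed local algebra must preserve the maximal ideal, so the honest automorphism groups would satisfy $G_n = G_n^+$ and $H_n = H_n^+$; the groups $G_n \supsetneq G_n^+$ and $H_n \supsetneq H_n^+$ of formal geometry only make sense as formal/ind-group objects (equivalently via the Harish--Chandra pairs $(\g_n, G_n^+)$ and $(\h_n, H_n^+)$), and the extra ``translations'' are only formal ones, i.e.\ $R$-points $x_i \mapsto x_i + c_i + \cdots$ with $c_i$ nilpotent in a test ring $R$. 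Consequently $H_n/H_n^+$ and $G_n/G_n^+$ are not $\mathbb{A}^n$ but the formal disk $\widehat{\mathbb{A}^n_0}$. This is forced by Proposition \ref{prop:gaitsgoryFG}: since $X \cong X_{dR} \times_{BG_n} BG_n^+$, the quotient $G_n/G_n^+$ is the fiber of $X \to X_{dR}$, which is a formal disk, not an affine space.

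The salvageable core of your argument is the observation that the quotient only depends on the degree-$\leq 1$ (abelianized) data, which is untouched by noncommutativity; that is essentially the paper's proof, which lets $H_n$ act on $\spec \Onh$ through the identification $\Anh/M_2 \cong \Onh$, notes the action is (formally) transitive, and identifies the stabilizer of the origin with $H_n^+$. To repair your write-up, replace $\C^n$ by the formal disk throughout, and carry out step (1) on $R$-points with nilpotent constants $c_i$ (or, at the infinitesimal level, via $\h_n/\h_n^+ \cong V \cong \g_n/\g_n^+$ from Lemma \ref{lem:DerAnh}); with that change your compatibility argument in steps (3)--(4) goes through.
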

\begin{proof}
Algebra automorphisms automatically preserve the commutator filtration, and the identification $\Anh/M_2 \cong \Onh$ gives the vertical arrows. To see that the third vertical arrow is an isomorphism, consider the action of $H_n$ on $\Onh$. The action is transitive, and the stabilizer of the origin is precisely $H_n^+$.
\end{proof}

The following is standard. 
\begin{lem}
\label{lem:DerAnh}
Write $\Anh = \widehat{T}_k(V)$ for a vector space $V$. Then 
\begin{align*}
\h_n = \Der_k(\Anh) &\cong \widehat{T}_k(V) \otimes_k V^*, \text{ and } \\
\h_n^+ = \Der^+_k(\Anh) &\cong \widehat{T}_k^{\geq 1}(V) \otimes_k V^*.
\end{align*}
\end{lem}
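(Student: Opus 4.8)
The plan is to use the universal property of the free (tensor) algebra to reduce a derivation to its values on generators, and then pass to completions, the finite-dimensionality of $V$ making all topological bookkeeping automatic.

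First I would recall the discrete statement: a $k$-linear derivation $D$ of $T_k(V)$ is uniquely determined by its restriction $D|_V\colon V \to T_k(V)$, since the Leibniz rule forces $D(v_1 \otimes \cdots \otimes v_m) = \sum_{i} v_1 \otimes \cdots \otimes D(v_i) \otimes \cdots \otimes v_m$, and conversely every $k$-linear map $V \to T_k(V)$ extends to a (necessarily unique) derivation by this formula. This yields a natural isomorphism $\Der_k(T_k(V)) \cong \Hom_k(V, T_k(V))$. Next I would check that this survives completion: a continuous derivation of $\Anh = \widehat{T}_k(V)$ restricts to a map $V \to \widehat{T}_k(V)$, while conversely any $k$-linear map $V \to \widehat{T}_k(V)$ extends by the Leibniz formula to an endomorphism of $\widehat{T}_k(V)$ which is continuous because it lowers filtration degree by at most one — the degree-zero component of some $D(v_i)$ can drop the word length of a degree-$m$ monomial to $m-1$, but no further — hence extends uniquely to the completion and defines a derivation there. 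These two constructions are mutually inverse by the uniqueness in the discrete step applied in each degree. Since $\dim_k V = n < \infty$, every linear map out of $V$ is automatically continuous and $\Hom_k(V, \widehat{T}_k(V)) \cong \widehat{T}_k(V) \otimes_k V^*$ canonically, giving $\h_n = \Der_k(\Anh) \cong \widehat{T}_k(V) \otimes_k V^*$.

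For the augmented version, I would use that $H_n^+$ consists of the automorphisms $\varphi$ with $\epsilon \circ \varphi = \epsilon$ for the augmentation $\epsilon\colon \Anh \to k$, so that $\h_n^+ = \{D \in \Der_k(\Anh) : \epsilon \circ D = 0\}$. Because $\epsilon$ is an algebra map, $\epsilon \circ D$ is itself a $k$-valued derivation of $\Anh$ and so is determined by its restriction to $V$; hence $\epsilon \circ D = 0$ if and only if $\epsilon(D(v)) = 0$ for all $v \in V$, i.e.\ $D(V) \subseteq \ker \epsilon = \widehat{T}_k^{\geq 1}(V)$. Under the isomorphism just established, this subspace is precisely $\widehat{T}_k^{\geq 1}(V) \otimes_k V^*$, proving the second claim.

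The only point requiring genuine care is the continuity/convergence check in the second paragraph: verifying that the Leibniz formula really does define an endomorphism of the completed algebra, and that there is no discrepancy between the word-length filtration used implicitly here and the $M_k$-filtration defining NC-completeness. The latter is handled by the comparison of topologies already recorded in the preliminaries, and the former by the "lowers degree by at most one" observation; beyond that the argument is routine, which is why the statement is flagged as standard.
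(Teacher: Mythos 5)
Your argument is correct and is precisely the standard proof the paper has in mind: the paper itself offers no proof of this lemma, simply labelling it ``standard,'' so there is nothing to diverge from. The only point you leave implicit is that you identify $\Hom_k(V,\Anh)$ with \emph{continuous} derivations, whereas the statement reads $\Der_k(\Anh)$; this costs nothing, since $\m^j=\widehat{T}_k^{\geq j}(V)$ holds algebraically (using $\dim_k V<\infty$) and the Leibniz rule then forces $D(\m^j)\subseteq\m^{j-1}$ for an arbitrary derivation $D$, so continuity is automatic --- and in any case $\h_n=\Lie H_n$ is continuous by construction.
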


\subsection{The Functor of Noncommutative Jets}

Consider the sheaf $\cO_X \boxtimes \cA$ on $X \times X$. By proposition 2.1.5, the pre-image of the complement of the ideal of the diagonal $I_\Delta \subset \cO_X \boxtimes \cO_X$ is an Ore set in $\cO_X \boxtimes \cA$. We can then restrict to open neighborhoods of the diagonal, and take a limit.

\begin{defn}
Let $(X, \cA)$ be an NC-manifold. Then $\cO_X \boxtimes \cA$ is a sheaf on $X \times X$ (see above). Let $p: \widehat{X \times X} \to X$ be the projection from the formal neighborhood of the diagonal to $X$, and $i: \widehat{X \times X} \to X \times X$ the inclusion. We define $\JNC \cA = p_* i^* \left( \cO_X \boxtimes \cA \right)$.
\end{defn}

We have the following key fact due to Kapranov \cite{k}, proposition 4.4.1.
\begin{prop}
    \label{prop:JNC}
The sheaf $\JNC \cA$ is a sheaf of $\cO_X$-algebras locally isomorphic to 
\[ \cO_X \otimes_{\underline{\C}} \underline{\Anh}, \]
and its abelianization is isomorphic to $\cJ \cO_X$.
\end{prop}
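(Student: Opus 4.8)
The plan is to verify the statement locally, since both claims (local structure and abelianization) are local on $X$, and then check that the local identifications glue appropriately. First I would work in an affine chart $U = \spec R$ where $(X, \cA)$ is an affine NC-manifold, so $\cA|_U = \widetilde{A}$ for an NC-smooth algebra $A$ with $A_{ab} = R$, and where moreover we may shrink so that $A$ is ``coordinatized'', i.e. there is an étale map $R \to \C[x_1,\dots,x_n]$ (equivalently, a system of coordinates) realizing $A$ as an NC-thickening built on these coordinates. On such a chart I would identify $\cO_X \boxtimes \cA$ restricted near the diagonal: the completion of $R \otimes_\C R$ along the diagonal ideal is $R \llcom t_1,\dots,t_n\rrcom$ where $t_i = x_i \otimes 1 - 1 \otimes x_i$, and the analogous NC-completion of $R \otimes_\C A$ should be $R \hat\otimes_\C \Anh$ with the $n$ generators of $\Anh$ playing the role of noncommutative versions of the $t_i$. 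The Ore localization statement (Proposition 2.1.5, referenced as ``proposition 2.1.5'' in the text) guarantees these localizations and completions behave well, so $\JNC\cA|_U \cong \cO_U \otimes_{\underline\C} \underline{\Anh}$.

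The key step is showing the completed stalks of $\cO_X \boxtimes \cA$ along the diagonal are $\Anh$ over the structure sheaf — that is, that passing from $\cA$ (whose completed stalks are $\Anh$) to the relative situation over the diagonal does not change the local model. Here I would use the NC-smoothness hypothesis directly: at a point $x \in X$, restrict $\cO_X \boxtimes \cA$ to $\{x\} \times X$, which gives back $\cA$ with completed stalk at $x$ isomorphic to $\Anh$; the formal neighborhood of the diagonal is then built by deforming over the base, and since the relevant filtration $M_k$ and the diagonal-ideal-adic filtration are compatible (abelianization sends one to the other, by the Remark following Definition 1.2 and the discussion of $M_k$ versus commutator ideals), the completion along the diagonal of $\cO_X \boxtimes \cA$ has completed stalks isomorphic to $\Anh$ relative to $\cO_X$. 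This is essentially Kapranov's Proposition 4.4.1, so I would cite it for the substance and just indicate how the statement is set up; the new content on my side is organizing the local models to make the gluing and the abelianization claim transparent.

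For the abelianization claim, I would apply the functor $(-)_{ab}$, which commutes with the localizations and completions involved (localization is exact and abelianization is a colimit, hence commutes with filtered colimits; for completions one checks the filtration $M_k$ abelianizes to the diagonal-adic filtration, so $(p_* i^* (\cO_X \boxtimes \cA))_{ab} = p_* i^* (\cO_X \boxtimes \cA_{ab}) = p_* i^*(\cO_X \boxtimes \cO_X)$, which is by definition $\cJ\cO_X$, the sheaf of jets of functions). Locally this is the statement that $(R\hat\otimes_\C\Anh)_{ab} = R\llcom t_1,\dots,t_n\rrcom = \cJ\cO_X|_U$, matching the bottom of Proposition 2.1.7's description.

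The main obstacle I anticipate is the bookkeeping around the Ore/localization conditions and the interaction of the two filtrations: one must be careful that $i^*$ (restriction to the formal neighborhood of the diagonal) is being taken in the correct noncommutative sense — not the naive $I_\Delta$-adic completion of $\cO_X \otimes \cA$, but a completion compatible with the $M_k$-filtration on the $\cA$-factor — and that the Ore condition of Proposition 2.1.5 is exactly what makes $p_* i^*$ well-defined and exact enough for abelianization to commute past it. Once that framework is pinned down, both assertions follow from the corresponding local facts about $\Anh$, and the gluing is automatic because the construction $\JNC\cA = p_* i^*(\cO_X \boxtimes \cA)$ is manifestly functorial and local on $X$.
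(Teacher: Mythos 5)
Your argument is correct and takes essentially the same route as the paper's (very terse) proof: local triviality of the formal neighborhood of the diagonal over $X$ together with NC-smoothness of the stalks gives the local model $\cO_X \otimes_{\underline\C}\underline{\Anh}$, and the abelianization claim follows because abelianization commutes with the localizations/completions and $\cA_{ab}\cong\cO_X$; both you and the paper defer the substantive local computation to Kapranov's Proposition 4.4.1. (One small slip worth fixing: an \'etale coordinate system is a map $\C[x_1,\dots,x_n]\to R$, not $R\to\C[x_1,\dots,x_n]$, and the ``Proposition 2.1.7'' you invoke does not exist in this paper.)
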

\begin{proof}
The first statement follows from the local triviality of the formal neighborhood of $X \subset X \times X$ as a bundle on $X$. The second follows from the fact that $\cA$ abelianizes to $\cO_X$.
\end{proof}

The following is clear by construction.
\begin{prop}
\label{prop:JetFibers}
The fibers of $\JNC \cA$ are canonically identified with the completed stalks of $\cA$.
\end{prop}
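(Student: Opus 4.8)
The plan is to unwind both sides of the asserted identification and to exhibit a canonical comparison map between them, namely restriction of jets to the formal disk at $x$.

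First I would pin down the fiber of $p$. Taking $p$ to be the projection onto the first factor of $X \times X$, the scheme-theoretic fiber $p^{-1}(x)$ is the formal neighborhood of $x$ inside the second copy of $X$, i.e.\ $\spec \widehat{\cO}_{X,x}$; this is the same computation that gives the fiber of the commutative jet scheme $\cJ\cO_X = p_* i^*(\cO_X \boxtimes \cO_X)$ at $x$, and nothing changes at the level of underlying formal schemes when $\cO_X$ is replaced by $\cA$ in the second slot.

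Next I would identify the restriction of $i^*(\cO_X \boxtimes \cA)$ to that fiber with the completed stalk. On an affine open $U \ni x$ with $\cA|_U = A$ and $A_{ab} = \cO(U)$, the sheaf $\cO_X \boxtimes \cA$ is $A_{ab} \otimes_\C A$ over $U \times U$; applying $i^*$ inverts the Ore set of elements lying over functions not vanishing on the diagonal and completes along the two-sided ideal generated by the diagonal ideal, while $p_*$ records the result as an $A_{ab}$-algebra via the \emph{first} tensor factor. Taking the fiber at $x$ is base change along $A_{ab} \to A_{ab}/\m_x = \C$ on that factor: it collapses $A_{ab} \otimes_\C A$ to $A$, carries the Ore set to the multiplicative set of preimages of functions on $U$ nonvanishing at $x$, and carries the diagonal ideal to $\pi^{-1}(\m_x) \subset A$. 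The outcome is precisely $A$ localized at $x$ and then completed, which is by Definition \ref{defn:NC} and the discussion preceding the definition of $\JNC$ the completed stalk $\widehat{A_x}$. The canonical identification is then the restriction map: a local section of $i^*(\cO_X \boxtimes \cA)$ near the fiber over $x$ restricts along $p^{-1}(x) \hookrightarrow \widehat{X \times X}$ to an element of $\widehat{A_x}$. To verify this is an isomorphism of fibers I would pass to a local trivialization: by Proposition \ref{prop:JNC} the sheaf $\JNC\cA$ is locally isomorphic to $\cO_X \otimes_{\underline{\C}} \underline{\Anh}$, under which the restriction map becomes the identity of $\Anh$, and by NC-smoothness $\Anh$ is also the model for $\widehat{A_x}$.

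The step I expect to require the most care, despite the slogan ``clear by construction,'' is the bookkeeping around the two completions. The completion built into $i^*$ is along the diagonal ideal of $X \times X$, whereas ``completed stalk'' refers to a completion along an augmentation / lower-central-series type filtration; after restricting to the slice $\{x\} \times X$ one must check that these two filtrations on the localization of $A$ are cofinal, so the completions coincide, and likewise that forming the fiber commutes with the Ore localization (flatness) and with the completion, so that no derived contributions appear. Since $\JNC\cA$ is a pro-object of finite-rank vector bundles with fiber $\Anh$, I expect this to reduce to the corresponding compatibility for $\cJ\cO_X$ in the commutative case together with the identity $M_2(A) = \ker(\pi)$, but it is this compatibility — rather than the restriction map itself — that genuinely needs to be spelled out.
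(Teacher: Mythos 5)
Your proof is correct and follows exactly the route the paper intends: the paper offers no argument at all beyond ``the following is clear by construction,'' and your unwinding --- identifying $p^{-1}(x)$ with the formal neighborhood of $x$ in the second factor, so that the restriction of $i^*(\cO_X\boxtimes\cA)$ becomes the Ore localization of $A$ at functions not vanishing at $x$ completed along $\pi^{-1}(\m_x)$, which is the completed stalk of Definition~\ref{defn:NC} --- is precisely that construction made explicit. Your cofinality worry in the last paragraph dissolves on inspection, since the diagonal ideal restricted to the slice $\{x\}\times X$ pulls back to exactly the ideal $\pi^{-1}(\m_x)$ defining the completed stalk, so the two completions agree on the nose.
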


The following definition appears in section 4.4 of \cite{k}.
\begin{defn}
Define the category $NC-Jet_X$ with objects given by pairs $(\cJ, \psi)$ where $\cJ$ is an $\cO_X$-algebra locally isomorphic to $\cO_X \otimes_{\underline{\C}} \underline{\Anh}$ and $\psi: \cJ \to \cJ \cO_X$ is a surjection with kernel given locally by the commutator ideal. Morphisms $(\cJ, \psi) \to (\cJ', \psi')$ are given by isomorphisms $f:\cJ \to \cJ'$ such that $\psi' \circ f = \psi$.
\end{defn}

The following equivalence of categories is proposition 4.4.2 in \cite{k}. In section \ref{sec:GK}, we will construct the quasi-inverse functor.
\begin{prop}
\label{prop:ThJet}
The functor $\JNC: NC-Th_X \to NC-Jet_X$ is an equivalence of categories.
\end{prop}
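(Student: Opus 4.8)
The plan is to construct the quasi-inverse functor $\Phi: NC\text{-}Jet_X \to NC\text{-}Th_X$ directly, and then verify that $\JNC$ and $\Phi$ are mutually inverse up to natural isomorphism. Given an object $(\cJ, \psi)$ of $NC\text{-}Jet_X$, the first step is to recall that $\cJ\cO_X$ carries a canonical flat connection (the Grothendieck connection on jets, coming from the two projections $\widehat{X \times X} \to X$), and to show that $\psi:\cJ \to \cJ\cO_X$ allows one to transport a flat connection onto $\cJ$ itself; this requires checking that the connection one writes down is well-defined independent of the local trivialization $\cJ \cong \cO_X \otimes_{\underline\C} \underline{\Anh}$, which holds because the ambiguity lies in $\Aut(\Anh)$-valued transition functions, and the jet connection is defined in an $\Aut$-equivariant way. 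I would then define $\Phi(\cJ, \psi) := \cJ^\nabla$, the sheaf of flat sections, which is a sheaf of $\C$-algebras on $X$ (flat sections form a subalgebra since the connection is an algebra derivation).

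The second step is to check that $\cJ^\nabla$ is genuinely an NC-thickening of $X$, i.e.\ an object of $NC\text{-}Th_X$. Here the key points are: (i) $\cJ^\nabla$ is a sheaf of $\cO_X$-algebras — one needs $\cO_X \to \cJ^\nabla$, which comes from the canonical flat section of the jet algebra $\cO_X \to \cJ\cO_X$ lifted along $\psi$ (using that $\cO_X$ is the flat sections of $\cJ\cO_X$); (ii) the abelianization of $\cJ^\nabla$ is $\cO_X$, which follows because abelianization is compatible with taking flat sections and $(\cJ\cO_X)^\nabla = \cO_X$; and (iii) NC-smoothness of the completed stalks, which is local and reduces to the statement that the completed stalk of $\cO_X \otimes_{\underline\C} \underline{\Anh}$ with its Grothendieck connection has flat sections $\Anh$ — essentially the noncommutative analogue of the fact that flat sections of the jet bundle recover functions. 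Functoriality of $\Phi$ is formal: a morphism $f:(\cJ,\psi) \to (\cJ',\psi')$ commutes with the transported connections (since both are pulled back from the same connection on $\cJ\cO_X$ via $\psi = \psi' \circ f$), hence restricts to flat sections.

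The third step is to exhibit the natural isomorphisms $\Phi \circ \JNC \cong \mathrm{id}_{NC\text{-}Th_X}$ and $\JNC \circ \Phi \cong \mathrm{id}_{NC\text{-}Jet_X}$. For the first, given an NC-thickening $\cA$, one has $\JNC\cA = p_* i^*(\cO_X \boxtimes \cA)$, which carries the Grothendieck connection coming from the \emph{other} projection, and its flat sections are canonically $\cA$ — this is the noncommutative version of the classical statement that $(\cJ\cF)^\nabla = \cF$ for the jet bundle of any quasicoherent sheaf, and it can be checked locally where $\cA$ is explicit. One must also verify this identification is compatible with the surjection to $\cJ\cO_X$, so it lands in $NC\text{-}Th_X$ and not just sheaves of algebras. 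For the second, given $(\cJ, \psi)$, one needs a canonical isomorphism $\JNC(\cJ^\nabla) \cong \cJ$ respecting $\psi$; locally this says that if you take flat sections of $\cO_X \otimes \underline{\Anh}$ and then form the jets of the resulting thickening you get back $\cO_X \otimes \underline{\Anh}$, which again is a local computation with the Grothendieck connection.

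The main obstacle I expect is step two, point (iii) — controlling the completed stalks of $\cJ^\nabla$, and more generally making sure all the ``obvious'' local computations with the Grothendieck connection go through in the noncommutative setting, where one must be careful that localizations are taken in the Ore sense (using Proposition 2.1.5 on Ore sets) and that completions along $M_k$ interact well with taking flat sections. The cleanest route is probably to set up everything locally over an open where $\cA$ (resp.\ $\cJ$) is trivialized, reduce to the model case $\cO_X \otimes_{\underline\C}\underline{\Anh}$ with its explicit connection $\nabla = d \otimes \mathrm{id} + (\text{shift by } dx_i \partial_{x_i})$ analogous to the commutative jet connection, compute flat sections there by the same power-series argument as in the commutative case (which works verbatim since it does not use commutativity of the $x_i$), and then check the gluing is $\Aut(\Anh)$-equivariant. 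Alternatively, since Proposition \ref{prop:ThJet} is attributed to Kapranov, one could simply cite \cite{k} Proposition 4.4.2 for the bulk of the argument and use this proof sketch only to indicate that the quasi-inverse is ``flat sections'', deferring the detailed construction of the flat connection to Section \ref{sec:GK}.
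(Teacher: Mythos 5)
Your overall strategy --- quasi-inverse given by flat sections of a canonical flat connection on $\cJ$, checked on stalks where everything reduces to the model $\cO_X\otimes_{\underline\C}\underline{\Anh}$ --- is exactly the route the paper takes: its ``proof'' of Proposition \ref{prop:ThJet} is just the citation of Proposition 4.4.2 of \cite{k} together with a forward reference to Section \ref{sec:GK}, where the quasi-inverse is realized as $(\--)^\nabla$ (Theorem \ref{thm:ThCoord} and Proposition \ref{prop:flatJet}). Your closing suggestion to cite Kapranov and defer the connection to Section \ref{sec:GK} is literally what the paper does.

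The one step in your sketch that does not work as written is the construction of the connection on an abstract object $(\cJ,\psi)$ of $NC\text{-}Jet_X$. You cannot ``transport'' a flat connection along the surjection $\psi:\cJ\to\cJ\cO_X$ (connections do not lift along quotients), and the fallback of gluing local Grothendieck connections is also not automatic: the standard connection on $\cO_U\otimes\underline{\Anh}$ is not invariant under arbitrary $\cO_U$-algebra automorphisms --- a gauge transformation $g$ changes it by $g^{-1}dg$ --- so the locally defined connections need not agree on overlaps. The correct mechanism, and the actual content of Section \ref{sec:GK}, is that the canonical action map $\Assoc_{frame(\cJ)}(\h_n)\to\cE$ into the Atiyah algebroid is an isomorphism (checked on stalks using the local model), and its \emph{inverse} is the sought $(\h_n,H_n^+)$-valued flat connection; no choice of trivialization enters. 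If you replace your step one by this argument (or simply cite it from Section \ref{sec:GK}), the rest of your outline --- flatness of diagonal sections, identification of stalks of $(\JNC\cA)^\nabla$ with $\cA_x$ as in \cite{jo}, and the local computation for $\JNC(\cJ^\nabla)\cong\cJ$ --- goes through as you describe.
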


\subsection{The Frame Bundle Functor}

The functor of noncommutative jets yeilds a sheaf of $\cO_X$-algebras locally isomorphic to $\cO_X \otimes_\C \Anh$. From this, give the following 
\begin{defn}
Let $\cA$ be an NC-thickening of $X$. Define $frame(\JNC \cA) = \cN$ to be the $H_n^+$-torsor of augmented $\cO_X$-algebra isomorphisms from $\JNC \cA$ to $\cO_X \otimes_{\underline{\C}} \underline{\Anh}$. We write $Coord(\cA) = frame(\JNC \cA) = \cN$ for the \emph{bundle of NC-coordinate systems associated to $\cA$}.
\end{defn}

\begin{rem}
With the above definition, it is clear that the fibers of $\cN = Coord(\cA)$ are isomorphic to the $H_n^+$-torsor of augmented $\C$-algebra isomorphisms from $\cA_x$ to $\Anh$.
\end{rem}

\begin{prop}
\label{prop:JetCoord}
We have equivalences of sheaves 
\[ frame \left( \Assoc_{\cN}(\Anh) \right) \stackrel{\sim}{\to} \cN \] 
and 
\[ \Assoc_{frame(\JNC \cA)}(\Anh) \stackrel{\sim}{\to} \JNC \cA. \]
\end{prop}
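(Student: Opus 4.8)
The plan is to establish the two displayed equivalences by the standard ``torsor/associated-bundle'' yoga, namely the fact that for a $G$-torsor $P$ and a $G$-variety (or $G$-object) $Y$, one has $\mathrm{frame}(\Assoc_P(Y)) \cong P$ whenever $Y$ carries a distinguished $G$-point whose stabilizer is trivial — in our case $Y = \Anh$ with its standard generators $x_1,\dots,x_n$, acted on by $H_n^+$, and the ``frame'' is the torsor of isomorphisms respecting the augmentation. First I would unwind the definition: $\Assoc_\cN(\Anh)$ is the sheaf $(\cN \times \Anh)/H_n^+$, where $H_n^+$ acts diagonally (on $\cN$ on the right, on $\Anh$ through its tautological action), and this is by construction an $\cO_X$-algebra Zariski-locally isomorphic to $\cO_X \otimes_{\underline\C} \underline{\Anh}$ since $\cN$ is locally trivial. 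Then $\mathrm{frame}$ of this sheaf is the $H_n^+$-torsor of augmented $\cO_X$-algebra isomorphisms to $\cO_X \otimes_{\underline\C} \underline{\Anh}$.

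For the first equivalence $\mathrm{frame}(\Assoc_\cN(\Anh)) \xrightarrow{\sim} \cN$: I would build the map by sending a local section $s$ of $\cN$ (i.e. a local trivialization, equivalently a local augmented isomorphism $\JNC\cA \to \cO_X \otimes \Anh$, though here we only need $\cN$ abstractly) to the induced trivialization of $\Assoc_\cN(\Anh)$, namely the isomorphism $\Assoc_\cN(\Anh) \to \cO_X \otimes \Anh$ determined by $[s(x), a] \mapsto a$. This is visibly $H_n^+$-equivariant: changing $s$ by $g \in H_n^+$ changes the induced framing by the action of $g$ on $\Anh$. Since both sides are $H_n^+$-torsors and the map is a map of torsors, it is automatically an isomorphism; one checks it is well defined locally and glues because the construction is natural in $\cN$. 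The second equivalence $\Assoc_{\mathrm{frame}(\JNC\cA)}(\Anh) \xrightarrow{\sim} \JNC\cA$ is the ``other half'' of the same correspondence: there is a canonical evaluation map $\mathrm{frame}(\JNC\cA) \times \Anh \to \JNC\cA$ sending $(\phi, a)$ to $\phi^{-1}(1 \otimes a)$, where $\phi \colon \JNC\cA \to \cO_X \otimes \Anh$ is a local framing; this is constant on $H_n^+$-orbits (replacing $\phi$ by $g \circ \phi$ and $a$ by $g^{-1}\cdot a$ gives the same value), hence descends to a map $\Assoc_{\mathrm{frame}(\JNC\cA)}(\Anh) \to \JNC\cA$ of $\cO_X$-algebras, which one checks is an isomorphism Zariski-locally (where $\JNC\cA$ is trivialized and the statement reduces to the identity $\Assoc_{H_n^+}(\Anh) = \Anh$ over a point, i.e. $(H_n^+ \times \Anh)/H_n^+ \cong \Anh$).

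The main obstacle, such as it is, is bookkeeping rather than mathematics: one must be careful that $\Assoc_\cN(-)$ is being formed in a category where $H_n^+$-quotients behave well (it is a pro-algebraic group and $\Anh$ is a pro-finite-dimensional/topological algebra, so the associated bundle should be formed as a limit of associated bundles for the finite-dimensional truncations $A_n/M_k$, each of which is handled classically), and that the augmentation/filtration data is preserved at each stage — but this compatibility is exactly what Lemma \ref{lem:DerAnh} and Proposition \ref{prop:LieQuot} are set up to guarantee, and it follows because algebra automorphisms automatically preserve the $M_k$-filtration. A secondary point to verify is that the two equivalences are mutually inverse in the appropriate sense, i.e. that $\mathrm{frame}$ and $\Assoc(-)(\Anh)$ are quasi-inverse equivalences between $NC\text{-}Coord_X$ and (the essential image among) sheaves locally isomorphic to $\cO_X \otimes_{\underline\C} \underline{\Anh}$; this is the familiar equivalence between a category of $G$-torsors and a category of locally trivial fiber bundles with structure group $G$, and no new input is needed beyond the local triviality already recorded in Propositions \ref{prop:JNC} and \ref{prop:ThJet}.
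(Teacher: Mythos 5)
Your proposal is correct and is exactly the argument the paper has in mind: the paper's proof consists of the single sentence that ``the standard argument showing the equivalence between $GL_m$-bundles and rank $m$-vector bundles applies after obvious modifications,'' and your write-up simply carries out that standard torsor/associated-bundle yoga explicitly (together with the sensible caveat about forming the quotient for the pro-algebraic group $H_n^+$ via finite-dimensional truncations). No discrepancy in approach, only in the level of detail.
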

\begin{proof}
The standard argument showing the equivalence between $GL_m$-bundles and rank $m$-vector bundles applies after obvious modifications.
\end{proof}

\begin{defn}
Define the category $NC-Coord_X$ with objects given by principal $H_n^+$-bundles $\cN$ locally trivial in the Zariski topology, equipped with isomorphisms $\cN \times_{H_n^+} G_n^+ \stackrel{\sim}{\to} \cM$ and morphisms given by compatible isomorphisms of bundles over $X$.
\end{defn}

We may now strengthen the statement of proposition \ref{prop:JetCoord}
\begin{prop}
\label{prop:JetCoordCat}
The functor $frame: NC-Jet_X \to NC-Coord_X$ is an equivalence of categories. The quasi-inverse is given by $\cN \mapsto \Assoc_\cN(\Anh)$.
\end{prop}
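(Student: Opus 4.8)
The plan is to treat this as the standard dictionary between torsors and their associated fiber objects, in the spirit of the $GL_m$-torsor/vector-bundle argument already invoked for Proposition~\ref{prop:JetCoord}, while carefully tracking the extra data on each side: the surjection $\psi$ for objects of $NC-Jet_X$, and the trivialization $\cN\times_{H_n^+}G_n^+\xrightarrow{\sim}\cM$ for objects of $NC-Coord_X$. Both categories are groupoids, so it is enough to produce the claimed quasi-inverse functor $\cN\mapsto\Assoc_\cN(\Anh)$ together with the two natural isomorphisms to the identity functors; fully faithfulness then follows formally.

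First I would check that $frame$ actually lands in $NC-Coord_X$. Given $(\cJ,\psi)$, the sheaf $\cN=frame(\cJ)$ of augmented $\cO_X$-algebra isomorphisms $\cJ\to\cO_X\otimes_{\underline\C}\underline{\Anh}$ is an $H_n^+$-torsor, Zariski-locally trivial precisely because $\cJ$ is Zariski-locally isomorphic to $\cO_X\otimes_{\underline\C}\underline{\Anh}$. The homomorphism $H_n^+\to G_n^+$ of Proposition~\ref{prop:LieQuot} (algebra automorphisms preserve the commutator filtration, and $\Anh/M_2\cong\Onh$) makes the abelianization map $\Anh\onto\Onh$ equivariant; applying $\psi$ and abelianizing therefore identifies $\cN\times_{H_n^+}G_n^+$ with the $G_n^+$-torsor of augmented $\cO_X$-algebra isomorphisms $\cJ\cO_X\to\cO_X\otimes\Onh$, which is exactly the classical bundle of coordinate systems $\cM$ (its fiber at $x$ is the set of isomorphisms between the formal neighborhood of $x$ and $\spec\Onh$, i.e.\ between the fiber of $\cJ\cO_X$ at $x$ and $\Onh$). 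This produces the required structure isomorphism $\cN\times_{H_n^+}G_n^+\xrightarrow{\sim}\cM$.

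Next I would check that $\cN\mapsto\Assoc_\cN(\Anh)$ lands in $NC-Jet_X$. By the argument of Proposition~\ref{prop:JetCoord}, $\Assoc_\cN(\Anh)$ is an $\cO_X$-algebra Zariski-locally isomorphic to $\cO_X\otimes_{\underline\C}\underline{\Anh}$. The $H_n^+$-equivariant surjection $\Anh\onto\Onh$ induces $\Assoc_\cN(\Anh)\onto\Assoc_\cN(\Onh)$, and $\Assoc_\cN(\Onh)=\Assoc_{\cN\times_{H_n^+}G_n^+}(\Onh)\cong\Assoc_\cM(\Onh)=\cJ\cO_X$ via the given structure isomorphism with $\cM$; this is the desired $\psi$, and its kernel is locally the commutator ideal because the kernel of $\Anh\onto\Onh$ is the closure of the commutator ideal and the identification is $H_n^+$-equivariant. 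On morphisms, an isomorphism $f\colon\cJ\to\cJ'$ with $\psi'f=\psi$ is sent to $\phi\mapsto\phi\circ f^{-1}$, which is $H_n^+$-equivariant and — exactly because the $\cM$-structures on $frame(\cJ)$, $frame(\cJ')$ were built from $\psi$, $\psi'$ — compatible with the trivializations; conversely a morphism in $NC-Coord_X$ yields via $\Assoc_{(-)}(\Anh)$ an $\cO_X$-algebra isomorphism commuting with the $\psi$'s.

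Finally I would assemble the natural isomorphisms $frame\circ\Assoc_{(-)}(\Anh)\cong\mathrm{id}_{NC-Coord_X}$ and $\Assoc_{(-)}(\Anh)\circ frame\cong\mathrm{id}_{NC-Jet_X}$: the underlying isomorphisms of torsors and of sheaves of algebras are those of Proposition~\ref{prop:JetCoord} (applied to a general object of $NC-Jet_X$, not just $\JNC\cA$, using local triviality), and one checks they respect the extra data — compatibility with $\psi$ on the jet side and with the $\cM$-trivialization on the coordinate side — again because every map in sight is induced from an $H_n^+$-equivariant map and hence commutes with abelianization. The only genuinely nonformal point, and thus the main obstacle, is the identification in the second paragraph of the classical $\cM$ with the frame bundle of $\cJ\cO_X$ and the compatibility of the $H_n^+$- and $G_n^+$-actions under $\Anh\onto\Onh$; once the homomorphism $H_n^+\to G_n^+$ of Proposition~\ref{prop:LieQuot} is available this is routine, and everything else is the torsor/associated-object bookkeeping already used to prove Proposition~\ref{prop:JetCoord}.
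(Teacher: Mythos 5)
Your proposal is correct and follows essentially the route the paper intends: the paper offers no proof of Proposition~\ref{prop:JetCoordCat} beyond presenting it as a strengthening of Proposition~\ref{prop:JetCoord}, whose proof is itself deferred to ``the standard argument'' for the $GL_m$-torsor/vector-bundle equivalence. Your write-up simply supplies the details the paper omits --- in particular the verification that the extra data ($\psi$ on the jet side, the $\cM$-trivialization on the coordinate side) correspond under the functors, via the $H_n^+\to G_n^+$ homomorphism of Proposition~\ref{prop:LieQuot} and the equivariance of $\Anh\onto\Onh$ --- and correctly isolates the identification of $\cM$ with the frame bundle of $\cJ\cO_X$ as the one nonformal step.
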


In summary, we presently have equivalences 
\begin{equation}
\label{eq:equiv-preGK}
\xymatrix{
NC-Th_X \ar^{\JNC}[r] \ar^{Coord}@/^3pc/[rr]& NC-Jet_X \ar^{frame}@/^1pc/[r] & NC-Coord_X \ar^{\Assoc_{\--}(\Anh)}@/^1pc/[l]
}
\end{equation}
The following subsection will give us the tools required to construct a quasi-inverse to $\JNC$, and thus also to $Coord$.

\subsection{The Noncommutative Gelfand-Kazhdan Structure}
\label{sec:GK}

In usual formal geometry, we have a diagram 
\[ 
\xymatrix{
    0 \ar[r] & ad_\cM \ar[r] \ar[d] &  \cE_\cM \ar[r] \ar^-\theta[dl] &  \cT_X \ar[r] &  0 \\
             & \Assoc_\cM(\Der_k(k\llcom x_1, ..., x_n\rrcom)) \ar^-\sim[r]  &\Assoc_\cM(\g_n) & &
}
\]
where 
\[ \cE_\cM = \left(\cT_\cM\right)^{G_n^+} \cong \Der_{\cO_X-alg}\left(\cJ \cO_X, \cO_X \otimes_{\underline{\C}} \underline{\Onh} \right) .\] 
The map $\theta$ is called the \emph{Harish-Chandra connection}, or the \emph{Gelfand-Kazhdan structure}. Rather than being a connection on $\cM$ as a $G_n^+$-bundle, it is a connection on $\cM$ as a $(\g_n, G_n^+)$-module; that is, it carries an action of $\g_n$ which is integrable along the subalgebra $\g_n^+$ (see \cite{jo} for details). Extending the action from $G_n^+$ to $\g_n$ gives a flat connection on jet bundles, whose flat sections undo the functor of jets; equivalently, it induces the diagram in proposition \ref{prop:gaitsgoryFG}.

\begin{defn}
Given $\cA \in NC-Th_X$ and $\cN = Coord(\cA)$, define 
\[ \cE_\cN = \Der_{\cO_X-alg}\left(\JNC \cA, \cO_X \otimes_{\underline{\C}} \underline{\Anh}\right). \]
\end{defn}

We wish to give an analogous diagram:
\begin{equation}
\label{eq:NC-GK}
\xymatrix{
    0 \ar[r] & ad_\cN \ar[r] \ar[d] &  \cE_\cN \ar[r] \ar^-{\theta^{NC}}[dl] &  \cT_X \ar[r] &  0 \\
             & \Assoc_\cN(\Der_k(\Anh)) \ar^-\sim[r] & \Assoc_\cN(\h_n)  & &
}
\end{equation}
In the commutative case, $\theta$ is an isomorphism, and is dual to the action map of vector fields on the formal disk on functions on the formal disk. 

\begin{prop}
\label{prop:globHn}
Let $\cN$ be an NC-coordinate system. On a trivialization of $\JNC \cA$, we have descriptions of the sheaves:
\begin{align*}
 ad_\cN = \Assoc_\cN(\h_n^+) &\cong \widehat{T}^{\geq 1}_\cO(\Omega^1_X) \otimes_{\cO_X} \cT_X  \\
 \Assoc_\cN(\h_n) &\cong \widehat{T}_\cO(\Omega^1_X) \otimes_{\cO_X} \cT_X \\
 \Assoc_\cN(\h_n/\h_n^+) & \cong \cT_X. 
\end{align*}
\end{prop}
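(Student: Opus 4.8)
The plan is to deduce all three isomorphisms from Lemma~\ref{lem:DerAnh} by the general principle that, for a principal $H_n^+$-bundle $\cN$, the associated bundle functor $\Assoc_\cN(-)$ sends an $H_n^+$-representation built functorially out of $\Anh$ (regarded as $\widehat{T}_k(V)$ with its tautological $H_n^+$-action) to the sheaf obtained by the same functorial construction with $V$ replaced by the ``tautological'' $\cO_X$-module attached to $\cN$. So the first step is to identify that tautological module: by construction $\cN = frame(\JNC\cA)$ trivializes $\JNC\cA$ as $\cO_X \otimes_{\underline\C} \underline{\Anh}$, hence on such a trivialization the generating subspace $V \subset \Anh$ corresponds to the degree-one part of the associated-graded of $\JNC\cA$ along its commutator filtration, which by Proposition~\ref{prop:JNC} (its abelianization is $\cJ\cO_X$) and the standard description of $\cJ\cO_X$ agrees with $\Omega^1_X$. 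Concretely: the $H_n^+$-action on $V$ factors through $G_n^+$ and is the standard one on the cotangent space, so $\Assoc_\cN(V) \cong \Assoc_\cM(V) \cong \Omega^1_X$ (using the fixed reduction $\cN \times_{H_n^+} G_n^+ \cong \cM$). Dually $\Assoc_\cN(V^*) \cong \cT_X$.

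Next I would feed this through Lemma~\ref{lem:DerAnh}. Since $\Assoc_\cN(-)$ is a (continuous, completed) symmetric monoidal functor from the category of pro-finite-dimensional $H_n^+$-representations with appropriate topology to $\cO_X$-modules — it commutes with tensor products, direct sums, duals, and completed limits because it is fppf-locally the identity — we get
\[
\Assoc_\cN(\h_n) = \Assoc_\cN\big(\widehat{T}_k(V) \otimes_k V^*\big) \cong \widehat{T}_{\cO_X}(\Omega^1_X) \otimes_{\cO_X} \cT_X,
\]
and likewise $\Assoc_\cN(\h_n^+) = \Assoc_\cN(\widehat{T}_k^{\geq 1}(V) \otimes_k V^*) \cong \widehat{T}_{\cO_X}^{\geq 1}(\Omega^1_X) \otimes_{\cO_X} \cT_X$, which is the asserted description of $ad_\cN$. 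For the quotient, Proposition~\ref{prop:LieQuot} gives $\h_n/\h_n^+ \cong \g_n/\g_n^+$ as an $H_n^+$-module (the action factoring through $G_n^+$), and the latter is canonically the tangent space $V^*$ of the formal disk at the origin; applying $\Assoc_\cN$ yields $\cT_X$. I should also check the claim $ad_\cN = \Assoc_\cN(\h_n^+)$ itself: this is the identification of the adjoint bundle of the $H_n^+$-torsor $\cN$ with the associated bundle for the adjoint representation $\h_n^+ = \Lie H_n^+$, which is the standard fact recalled implicitly in the commutative Gelfand--Kazhdan setup of section~\ref{sec:GK}, combined with Lemma~\ref{lem:DerAnh}'s identification $\h_n^+ = \Der_k^+(\Anh)$.

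The main obstacle is not any single algebraic identity but making precise that $\Assoc_\cN(-)$ genuinely commutes with the completed tensor/limit operations defining $\h_n$ and $\h_n^+$ — i.e.\ that $\Assoc_\cN(\widehat{T}_k(V)\otimes_k V^*)$ is $\widehat{T}_{\cO_X}(\Omega^1_X)\otimes_{\cO_X}\cT_X$ with the correct (commutator-adic) topology, rather than merely an associated graded or a naive completion. The clean way to handle this is to note that $H_n^+$ acts on each truncation $\Der_k(\Anh)/(\text{terms in }\widehat{T}^{\geq m})$, which is finite-dimensional, so $\Assoc_\cN$ applies termwise and one passes to the limit; local triviality of $\cN$ (in the Zariski topology, by the definition of $NC\text{-}Coord_X$) ensures the limit sheaf is the expected $\cO_X$-module and that the stated isomorphisms are the local ones promised by the phrasing ``on a trivialization of $\JNC\cA$.'' I would remark that this is exactly parallel to the identification $\Assoc_\cM(\g_n) \cong \widehat{T}_{\cO_X}(\Omega^1_X) \otimes \cT_X$ modulo replacing $T$ by $\Sym$, which is presumably already in the cited references \cite{g}, \cite{jo}, so the write-up can be brief.
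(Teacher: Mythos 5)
Your proposal is correct and follows essentially the same route as the paper: the paper's proof also reduces to globalizing Lemma~\ref{lem:DerAnh} by identifying the generating subspace $V$ of the fiber $\widehat{T}_kV$ of $\JNC\cA$ with the cotangent space, via the fact that the generators of $\cA_x$ abelianize to those of $\widehat{\cO}_{X,x}$. You simply spell out in more detail the points (monoidality of $\Assoc_\cN$ under completed limits, the identification $ad_\cN = \Assoc_\cN(\h_n^+)$) that the paper leaves implicit.
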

\begin{proof}
This is a globalization of \ref{lem:DerAnh}. It follows from propositions \ref{prop:JNC} and \ref{prop:JetFibers} that the fiber of $\JNC \cA$ at $x \in X$, in turn isomorphic to the stalk $\cA_x$, is canonically identified with $\widehat{T}_k V$, where $V$ is the cotangent space to $X$ at $x$. Equivalently, the generators of $\cA_x$ abelianize to the generators of $\widehat{\cO}_{X,x}$.
\end{proof}

\begin{rem}
This is analogous to the decomposition of $ad_\cM$, where the tensor powers above are replaced with \emph{symmetric powers} of the tangent bundle; see \cite{k}, section 4.2.
\end{rem}

\begin{prop}
The sheaf of Lie algebras $\Assoc_\cN(\h_n)$ acts naturally on $\JNC \cA$, hence there is a natural map $\Assoc_\cN(\h_n) \to \cE_\cN$. This map is an isomorphism.
\end{prop}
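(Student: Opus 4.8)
The plan is to mimic the commutative proof in miniature, using the decompositions of Proposition~\ref{prop:globHn}. First I would unwind the definitions: $\cE_\cN = \Der_{\cO_X\text{-alg}}(\JNC\cA, \cO_X \otimes_{\underline{\C}} \underline{\Anh})$ is (on a trivialization of $\JNC\cA$) the sheaf of $\cO_X$-linear derivations of $\cO_X \otimes \Anh$, which by the free algebra description $\Anh = \widehat{T}_\cO(\Omega^1_X)$ over the trivialization is $\widehat{T}_\cO(\Omega^1_X) \otimes_{\cO_X}(\Omega^1_X)^* = \widehat{T}_\cO(\Omega^1_X)\otimes_{\cO_X}\cT_X$ — exactly $\Assoc_\cN(\h_n)$ from Proposition~\ref{prop:globHn}. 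So the natural action map $\Assoc_\cN(\h_n) \to \cE_\cN$, which sends a section of $\h_n$ (a derivation of $\Anh$) to the induced derivation of $\JNC\cA$ via the torsor structure, is locally the identity under these matching descriptions. Since an isomorphism of sheaves can be checked Zariski-locally, and $\cN$ (hence $\JNC\cA$) is Zariski-locally trivial, this gives the result.

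The key steps, in order: (1) construct the action map globally: a section of $\Assoc_\cN(\h_n)$ is, over an open $U$ where we pick a trivialization $\phi: \JNC\cA|_U \xrightarrow{\sim} \cO_U \otimes \Anh$, a $\phi$-dependent element of $\Der_k(\Anh)$-valued function; the $H_n^+$-equivariance built into $\Assoc_\cN(-)$ ensures the resulting derivation of $\JNC\cA$ is independent of $\phi$, giving a well-defined map of sheaves $\Assoc_\cN(\h_n) \to \cE_\cN$; (2) check it is $\cO_X$-linear and lands in $\cO_X$-algebra derivations (immediate, since derivations of $\Anh$ over $k$ induce $\cO_X$-algebra derivations of $\cO_X\otimes\Anh$ and the identification $\JNC\cA$-abelianizes-to-$\cJ\cO_X$ of Proposition~\ref{prop:JNC} is respected); (3) identify both source and target with $\widehat{T}_\cO(\Omega^1_X)\otimes_{\cO_X}\cT_X$ over a trivialization, the source by Proposition~\ref{prop:globHn} and the target by the explicit description of $\cO_X$-linear derivations of a free $\cO_X$-algebra on a projective module; (4) verify that under these identifications the action map is the identity, so in particular an isomorphism locally, hence globally.

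The main obstacle I expect is step (1)/(4): being careful that the identification of $\cE_\cN$ with $\Assoc_\cN(\h_n)$ is genuinely canonical and not just an abstract isomorphism of locally-free-ish sheaves of the same ``shape.'' One has to check that the transition functions match — i.e., that the $H_n^+$-action on $\Der_k(\Anh)$ used to build $\Assoc_\cN(\h_n)$ is exactly the conjugation action that appears when one changes the trivialization $\phi$ of $\JNC\cA$ and tracks what happens to a derivation of $\JNC\cA$. This is the associativity/naturality bookkeeping that makes ``locally the identity'' into a well-posed statement; once that is confirmed, the isomorphism is formal. A minor secondary point is ensuring $\cE_\cN$ really is the sheaf of \emph{all} $\cO_X$-algebra derivations valued in $\cO_X \otimes \Anh$ (not just those preserving some augmentation), which is what matches $\h_n$ rather than $\h_n^+$; this is already implicit in the definition of $\cE_\cN$ but worth flagging, as it is the point of the whole Gelfand-Kazhdan phenomenon.
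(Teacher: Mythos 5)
Your proposal is correct and follows essentially the same route as the paper: the paper's proof simply observes that the fibers of $\cE_\cN$ are $\Der_{\C\text{-alg}}(\cA_x,\Anh)$ and that the natural action map is therefore an isomorphism on stalks, which is exactly your step (3)--(4) of identifying both sides with derivations of the free algebra and checking the map locally. Your version just makes the equivariance bookkeeping of steps (1)--(2) explicit, which the paper leaves implicit.
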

\begin{proof}
The fibers of $\cE_\cN$ are given by $\Der_{\C-alg}\left( \cA_x, \Anh \right)$. Thus the natural action map induces an isomorphism on stalks.
\end{proof}

\begin{rem}
\label{rem:flat}
The action map defining the splitting is a map of Lie algebras; therefore the connection is flat.
\end{rem}

\begin{defn}
We call the $(\h_n, H_n^+)$-valued connection $\theta^{NC} = \alpha^{-1}$ the \emph{noncommutative Gelfand-Kazhdan structure on $\cN$}.
\end{defn}

\begin{rem}
\label{rem:connection}
The map $\theta$ induces a connection on any associated bundle $\Assoc_\cN(M)$, $M \in H_n^+\Mod$. This connection can be written 
\[ \nabla_\xi(s) = ds(\xi) + \theta(\tilde{\xi}), \]
where $\tilde{\xi}$ is any lift of $\xi \in \cT_X$ to  $\cE_\cN$.
\end{rem}

\begin{rem}
The above discussion extends the action of $H_n^+$ on $\cN \in NC-Coord_X$ to an action of $\h_n = \Lie H_n$. As in the commutative setting (see \ref{sec:recallFG}), this can also be phrased in terms of extending the action to $H_n$. We will often use the language of $H_n$ rather than $\h_n$ in order to phrase results in terms of classifying spaces.
\end{rem}

\begin{cor}
Given an NC-coordinate system $\cN$ on $X$, we have $\cN/H_n \cong X_{dR}$.
\end{cor}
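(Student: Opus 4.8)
The plan is to mirror the commutative statement $\cM/G_n\cong X_{dR}$ of Proposition \ref{prop:gaitsgoryFG} (following \cite{g}), the only genuinely new ingredient being the comparison of quotients in Proposition \ref{prop:LieQuot}. The NC-Gelfand-Kazhdan structure constructed above equips $\cN$ with a flat $(\h_n,H_n^+)$-connection: an action of $\h_n=\Lie H_n$, integrable along $\h_n^+$, extending the $H_n^+$-torsor structure. By the standard theory of Harish-Chandra descent --- whose proof uses only formal integrability and therefore applies verbatim here, cf. \cite{g} and \cite{jo} for the commutative case --- such a flat $(\h_n,H_n^+)$-connection on the $H_n^+$-torsor $\cN$ over $X$ is the same datum as a descent of $\cN$, with structure group enlarged from $H_n^+$ to $H_n$, along the projection $X\to X_{dR}$. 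Unwinding, the Harish-Chandra quotient $\cN/H_n$ is precisely the base $X_{dR}$ of this descent, which is the corollary.

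To make this explicit, and to exhibit its compatibility with Proposition \ref{prop:gaitsgoryFG} (used later in Proposition \ref{prop:Dmod}), I would reformulate the statement as the assertion that the square
\[
\xymatrix{
X \ar[r] \ar[d] & X_{dR} \ar[d]^{q^{NC}} \\
BH_n^+ \ar[r] & BH_n
}
\]
is Cartesian, where the left vertical arrow classifies $\cN$, the top arrow is the canonical projection, the bottom arrow comes from $H_n^+\hookrightarrow H_n$, and $q^{NC}$ classifies the descended $H_n$-torsor. I would prove this by pasting the square on top of
\[
\xymatrix{
BH_n^+ \ar[r] \ar[d] & BH_n \ar[d] \\
BG_n^+ \ar[r] & BG_n,
}
\]
with vertical maps induced by $H_n^+\hookrightarrow G_n^+$ and $H_n\hookrightarrow G_n$. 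The lower square is Cartesian because, by Proposition \ref{prop:LieQuot}, the map $H_n\to H_n/H_n^+$ factors through the isomorphism $H_n/H_n^+\xrightarrow{\sim}G_n/G_n^+$ as the composite $H_n\to G_n\to G_n/G_n^+$, so $H_n^+$ is the preimage of $G_n^+$, i.e. $H_n^+\cong H_n\times_{G_n}G_n^+$. The pasted outer square has corners $X$, $X_{dR}$, $BG_n^+$, $BG_n$; its left vertical arrow classifies $\cM$ (using $\cN\times_{H_n^+}G_n^+\cong\cM$), and its right vertical composite agrees with the classical Gelfand-Kazhdan map $q$ because the NC-GK splitting $\theta^{NC}$ pushes forward to the commutative splitting $\theta$ under abelianization of jets $\JNC\cA\to\cJ\cO_X$ --- on the level of Proposition \ref{prop:globHn} this is just the statement that the tautological inclusion $\cT_X\hookrightarrow\widehat{T}_{\cO}(\Omega^1_X)\otimes_{\cO_X}\cT_X$ maps to $\cT_X\hookrightarrow\widehat{\Sym}_{\cO}(\Omega^1_X)\otimes_{\cO_X}\cT_X$. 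Thus the outer square is the Cartesian square of Proposition \ref{prop:gaitsgoryFG}, and by the pasting law for pullbacks the top square is Cartesian as well.

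The step requiring the most care is the compatibility invoked above: that $q^{NC}$ lifts the classical map $q:X_{dR}\to BG_n$, equivalently that the reduction-of-structure-group embedding $\cN\hookrightarrow\cM$ intertwines $\theta^{NC}$ with $\theta$ along $\h_n\to\g_n$. I would verify this on completed stalks, where it reduces to the evident facts that a derivation of $\Anh$ is intertwined with the derivation it induces on $\Onh=\Anh/M_2$ by the quotient map, and that abelianization is functorial for isomorphisms of jet algebras; the point to be careful about is to phrase everything at the level of the morphism of Harish-Chandra pairs $(\h_n,H_n^+)\to(\g_n,G_n^+)$ rather than of the underlying torsors, so that it genuinely induces a map of descents to $X_{dR}$. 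The remaining steps --- the standard Harish-Chandra descent argument and the diagram chase with Cartesian squares --- are formal.
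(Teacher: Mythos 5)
Your argument is correct, but it takes a genuinely different route from the paper. The paper's proof is a one-line citation of Proposition \ref{prop:globHn}: the NC-Gelfand-Kazhdan splitting identifies $\Assoc_\cN(\h_n/\h_n^+)$ with $\cT_X$, so after quotienting $\cN$ by $H_n^+$ (giving $X$) the residual infinitesimal $\h_n$-action exhausts the tangent sheaf, and the quotient is $X_{dR}$ directly. You instead reduce to the commutative statement of Proposition \ref{prop:gaitsgoryFG}: you observe that $BH_n^+\cong BH_n\times_{BG_n}BG_n^+$ (a clean reformulation of Proposition \ref{prop:LieQuot}, since $H_n$ acts transitively on $G_n/G_n^+$ with stabilizer $H_n^+$) and then paste Cartesian squares. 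Both proofs ultimately rest on the same local fact --- that the inclusion $\h_n\hookrightarrow\g_n$ induces an isomorphism on the quotients by the augmented subalgebras --- but your version buys the explicit lift $q^{NC}:X_{dR}\to BH_n$ of $q$ and its compatibility with the classical picture, which the paper only establishes later (Theorem \ref{thm:reduction} and the diagram preceding Proposition \ref{prop:Dmod}), at the cost of needing exactly that compatibility (that the reduction $\cN\hookrightarrow\cM$ intertwines $\theta^{NC}$ with $\theta$) as an input; your stalk-wise verification of it matches the paper's own argument via the commuting square $\cE_\cN\to\cE_\cM$ over $\Assoc_\cN(\h_n)\to\Assoc_\cM(\g_n)$. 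The only point I would insist you make explicit is the interpretation of $\cN/H_n$ as the quotient by the Harish-Chandra groupoid generated by $H_n^+$ and the integrable $\h_n$-action, since $H_n$ does not act on $\cN$ as an honest group; with that reading your descent argument is complete.
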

\begin{proof}
This follows from proposition \ref{prop:globHn}.
\end{proof}

\begin{rem}
In the language of \cite{gr-crys}, an NC-coordinate system $\cN$ is naturally a \emph{crystal} on $X$, i.e., it is equipped with descent data from $X$ to $X_{dR}$.
\end{rem}

\section{NC-Thickenings, NC-Coordinate Systems, and NC-Connections}

This section establishes equivalences of categories between NC-thickenings of a variety $X$, bundles of NC-coordinate systems over $X$, and NC-connections on $X$.

\subsection{Relation with NC-Smooth Thickenings}

We will show an equivalence of categories between noncommutative thickenings of $X$ and noncommutative systems of coordinates on $X$.

\begin{thm}
\label{thm:ThCoord}
The functor $Coord:NC-Th_X \to NC-Coord_X$ is an equivalence of categories. The inverse is given by taking flat sections of $\Assoc_\cN(\Anh)$ under the NC-Gelfand Kazhdan structure on $\cN$.
\end{thm}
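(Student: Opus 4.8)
The plan is to prove Theorem~\ref{thm:ThCoord} by assembling the equivalences already available and constructing a quasi-inverse to $Coord$ out of the NC-Gelfand--Kazhdan structure. Recall from diagram~\eqref{eq:equiv-preGK} that we already have $Coord = frame \circ \JNC$ with $\JNC: NC\text{-}Th_X \xrightarrow{\sim} NC\text{-}Jet_X$ an equivalence (Proposition~\ref{prop:ThJet}) and $frame: NC\text{-}Jet_X \xrightarrow{\sim} NC\text{-}Coord_X$ an equivalence (Proposition~\ref{prop:JetCoordCat}), whose quasi-inverse sends $\cN \mapsto \Assoc_\cN(\Anh)$. So $Coord$ is already a composite of two equivalences, hence an equivalence; the substantive content is the explicit description of the inverse as ``flat sections of $\Assoc_\cN(\Anh)$.'' First I would recall that $\Assoc_\cN(\Anh) \cong \JNC\cA$ by Proposition~\ref{prop:JetCoord}, and that under the NC-Gelfand--Kazhdan structure $\theta^{NC}$ this sheaf carries the flat connection $\nabla$ described in Remark~\ref{rem:connection}, coming from the extended $(\h_n, H_n^+)$-action.

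The key step is then to identify $(\JNC\cA)^\nabla$ with $\cA$ itself, compatibly with morphisms. For this I would argue locally: on a Zariski trivialization of $\cN$, Proposition~\ref{prop:JNC} gives $\JNC\cA \cong \cO_X \otimes_\C \Anh$, and the flat connection $\nabla$ is, by construction of $\theta^{NC}$ (which is dual to the $\h_n$-action of derivations on $\Anh$, exactly as $\theta$ is dual to the vector-field action in the commutative case), the ``Harish-Chandra'' connection whose horizontal sections over a point $x$ are precisely the image of $\cA_x$ in its completion---i.e.\ the connection is the one for which the tautological inclusion $\cA \hookrightarrow \JNC\cA$ (sending a local section to its jet) lands in flat sections. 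One direction is formal: since the $\h_n$-action differentiates the $H_n^+$-action and $\JNC\cA$ is built from $\cA$ by a construction (formal neighborhood of the diagonal) that is manifestly functorial and carries a canonical flat connection along the second factor, the jet map $\cA \to \JNC\cA$ is horizontal. The reverse inclusion, that every flat section is a jet, is checked on completed stalks using Proposition~\ref{prop:JetFibers}: the completed stalk of $\JNC\cA$ at $x$ is $\widehat{\cA}_x$, the flat connection restricts there to the action of $\h_n$ integrating the $H_n^+$-action on the torsor fiber, and the flat (= $H_n^+$-invariant, then $\h_n$-invariant) vectors in $\widehat{\cA}_x$ are exactly $\cA_x$; this is the noncommutative analogue of the classical statement that flat jets of functions are functions, and can be reduced to it by passing to associated graded along $M_\bullet$ or argued directly from the fact that $\theta^{NC}$ is an isomorphism onto $\cE_\cN$ (the displayed proposition just before the NC-Gelfand--Kazhdan definition). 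Finally I would note that the two functors $Coord$ and $\cN \mapsto (\Assoc_\cN\Anh)^\nabla$ are mutually quasi-inverse: one composite is $\cA \mapsto (\JNC\cA)^\nabla \cong \cA$ just shown, and the other composite $\cN \mapsto \JNC\left((\Assoc_\cN\Anh)^\nabla\right) \mapsto frame(\cdots)$ is identified with the identity via Proposition~\ref{prop:JetCoord}, since $\JNC$ of the flat sections recovers $\Assoc_\cN(\Anh)$ and taking $frame$ returns $\cN$.

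The main obstacle I expect is making precise---and correctly normalized---the claim that the flat sections of $\JNC\cA$ are exactly $\cA$ rather than merely containing it or differing by a completion. Concretely, one must check that the NC-Gelfand--Kazhdan connection is the ``right'' one, i.e.\ that the embedding $\cA \hookrightarrow \JNC\cA$ used to construct $\cN = frame(\JNC\cA)$ is horizontal for $\nabla = \theta^{NC}$; this amounts to unwinding that the $\h_n$-action on $\cN$ produced by the splitting agrees with the tautological action coming from varying the basepoint on the formal neighborhood of the diagonal. In the commutative case this is the content of the standard Gelfand--Kazhdan story, and here it follows from the proof that $\theta^{NC}$ is induced by the Lie algebra action map $\Assoc_\cN(\h_n) \to \cE_\cN$ being an isomorphism on stalks. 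Once that compatibility is pinned down, exactness of flat sections on stalks and the already-established categorical equivalences do the rest, and no hard new computation is required---the work is entirely in the careful bookkeeping of torsors, connections, and completions, exactly parallel to the classical case recalled in Section~\ref{sec:recallFG}.
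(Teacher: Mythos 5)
Your proposal matches the paper's proof in both structure and substance: the paper likewise observes that $Coord = frame \circ \JNC$ is an equivalence by Propositions~\ref{prop:ThJet} and~\ref{prop:JetCoordCat}, and reduces the description of the inverse to the natural isomorphism $\cA \xrightarrow{\sim} (\JNC\cA)^\nabla$ (Proposition~\ref{prop:flatJet}), which it proves exactly as you do --- by showing the diagonal/jet map $\cA \to \JNC\cA$ is horizontal via the connection formula of Remark~\ref{rem:connection} and then checking the isomorphism on stalks (the paper defers that stalk computation to Proposition~4.23 of \cite{jo}, where you sketch it directly). No gaps; this is essentially the paper's argument.
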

\begin{proof}
The fact that $Coord$ is an equivalence follows from propositions \ref{prop:ThJet} and \ref{prop:JetCoordCat}. The second claim follows from proposition \ref{prop:flatJet} below.
\end{proof}

\begin{prop}
\label{prop:flatJet}
There is a natural isomorphism 
\[ \cA \stackrel{\sim}{\to} \left( \JNC \cA \right)^\nabla. \]
\end{prop}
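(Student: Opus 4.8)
The plan is to construct the natural map $\cA \to (\JNC\cA)^\nabla$ directly and then check it is an isomorphism by working on stalks, where everything reduces to the local model $\Anh$ equipped with its standard Gelfand-Kazhdan action. First I would recall that $\JNC\cA = p_* i^*(\cO_X \boxtimes \cA)$, so there is a canonical algebra map $\cA \to \JNC\cA$ coming from the second projection $\widehat{X\times X} \to X$ (the one used to define $\cA$ on the second factor), i.e. from the unit of adjunction that includes $\cA$ as ``functions on the second copy of $X$ restricted to the formal neighborhood of the diagonal, pushed to the first copy.'' Concretely, on the trivialization $\JNC\cA \cong \cO_X \otimes_{\underline\C}\underline{\Anh}$ this is the inclusion of the Taylor expansion of a section of $\cA$ along the diagonal. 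The key point is that this map lands in flat sections: the flat connection $\nabla$ on $\JNC\cA$ from Remark \ref{rem:connection} is by construction the one whose flat sections are the ``constant along the first factor'' sections, and a Taylor expansion of a fixed element of $\cA$ has vanishing derivative in the first factor. Equivalently, $\nabla$ is induced by the $\h_n$-action extending the $H_n^+$-action on $\cN$, and this action is dual to the action of derivations of $\Anh$ on $\Anh$ itself; the image of $\cA$ is precisely the subsheaf annihilated by this action.

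Next I would verify the map is an isomorphism by passing to completed stalks at a point $x \in X$. By Proposition \ref{prop:JetFibers} the fiber of $\JNC\cA$ at $x$ is the completed stalk $\widehat{\cA}_x \cong \Anh$, and more is true: on a Zariski neighborhood $U$ trivializing $\cN$, we have $\JNC\cA|_U \cong \cO_U \widehat\otimes \Anh$ compatibly with the connection, where $\nabla$ becomes the standard flat connection coming from the Gelfand-Kazhdan/Harish-Chandra structure, i.e. the formal analogue of $d + \sum \theta(\partial_i)$. The sheaf of flat sections of $\cO_U \widehat\otimes \Anh$ for this connection is exactly $\widehat{\cA}_x$'s worth of data glued up — more precisely, it is the ``horizontal'' copy of $\cA|_U$ — by the formal Poincaré lemma / the standard fact that flat sections of a jet-type bundle with its canonical connection recover the original sheaf (this is exactly how the commutative statement $(\cJ\cO_X)^\nabla \cong \cO_X$ works). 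Since the map $\cA \to (\JNC\cA)^\nabla$ is a map of sheaves of $\cO_X$-algebras that is a local isomorphism, it is an isomorphism.

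For naturality, I would observe that every step — the unit map $\cA \to \JNC\cA$, the connection $\nabla$, and the identification of flat sections — is functorial in $\cA \in NC\text{-}Th_X$: a morphism $\cA \to \cA'$ in $NC\text{-}Th_X$ induces $\JNC\cA \to \JNC\cA'$ compatibly with connections (both being pulled back from the $\h_n$-action via $Coord$), hence a commuting square with the flat-section functor applied. Combined with Theorem \ref{thm:ThCoord}'s other half, this shows $\cA \mapsto (\Assoc_\cN(\Anh))^\nabla$ with $\cN = Coord(\cA)$ is quasi-inverse to $Coord$, using Proposition \ref{prop:JetCoordCat}'s identification $\Assoc_\cN(\Anh) \cong \JNC\cA$.

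The main obstacle I anticipate is the local computation identifying $(\cO_U \widehat\otimes \Anh)^\nabla$ with $\cA|_U$: one must make precise the claim that the noncommutative Gelfand-Kazhdan connection in coordinates is genuinely the ``total derivative'' connection for which the formal Poincaré lemma applies verbatim, and check that solving the resulting system of differential equations in the completed tensor product produces exactly the sections coming from $\cA$ and no more. The noncommutativity of $\Anh$ does not actually interfere here — the connection acts by derivations, and the argument is formally identical to the commutative case — but it requires care to set up the completions correctly so that ``flat section'' is a well-defined notion termwise in the $M_k$-adic filtration, and to ensure the comparison is compatible with the algebra structure rather than just the $\cO_X$-module structure.
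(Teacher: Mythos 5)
Your proposal matches the paper's proof in both structure and substance: the paper constructs the same Taylor-expansion map (a section of $\cA$ gives one of $\cO_X\boxtimes\cA$ via the diagonal, hence of $\JNC\cA$), verifies it lands in flat sections by the same ``derivative along the base equals derivative along the fiber'' criterion, and then reduces to a stalk computation. The only difference is that the local step you flag as the main obstacle is handled in the paper by citing Proposition 4.23 of \cite{jo} rather than by a direct formal Poincar\'e lemma argument.
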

\begin{proof}
We define a map $\varphi:\cA \to \left( \JNC \cA \right)^\nabla$ as follows. A section of $\cA$ determines one of $\cA \boxtimes \cA$ via the diagonal map, hence of $\cO_X \boxtimes \cA$ via projection. For any $U \subset X$, 
\[ \left(\cO_X \boxtimes \cA\right)(U \times U) \to \JNC \cA(U). \]
We claim that the image of this composition lands in the subsheaf of flat sections of $\JNC \cA$. The formula in remark \ref{rem:connection} shows that flatness of a section is equivalent to the derivative along the base being equal to the derivative long the fiber. The above map factors through the diagonal map $\cA \to \cA \boxtimes \cA$, hence this is satisfied.

The proposition now follows from 
\begin{lem}
The stalks of $\left(\JNC \cA\right)^\nabla$ are naturally isomorphic to $\cA_x$.
\end{lem}
\begin{proof}
This can be deduced from \cite{jo}, proposition 4.23.
\end{proof}

\end{proof}

We may now add another equivalence of categories to diagram (\ref{eq:equiv-preGK}): 
\begin{equation}
\xymatrix{
NC-Th_X \ar^{\JNC}@/^1pc/[r] \ar^{Coord}@/^3pc/[rr]& \ar^{\left( \-- \right)^\nabla}@/^1pc/[l] NC-Jet_X \ar^{frame}@/^1pc/[r] & NC-Coord_X \ar^{\Assoc_{\--}(\Anh)}@/^1pc/[l] 
}
\label{eq:equiv-GK}
\end{equation}

\subsection{Relation with NC-Connections}

The aim of this section is to interpret the connection $\nabla$ in terms of the NC-connections of \cite{pt}. An equivalence of categories follows formally from the preceding section and the main theorem of \cite{pt}, but we wish to illustrate directly how NC-connections relate to NC-coordinate systems.

The noncommutative Gelfand-Kazhdan structure on $\cN$ induces a flat connection on $\JNC \cA$ (see remarks \ref{rem:flat} and \ref{rem:connection}). This flat connection can be equivalently described by 
\[ \omega \in \Omega^1_X \otimes_{\cO_X} \cEnd(\JNC \cA). \]
Note that $\cEnd(\JNC \cA)$ is locally identified with
\[ \prod_k \cHom(\cT_X^{\otimes k}, \cT_X) \cong \prod_k \cHom\left(\Omega^1_X, \left( \Omega^1_X \right)^{\otimes k} \right). \]
Thus locally $\omega$ yields an \emph{untwisted} NC-connection (see definition \ref{defn:NC-Conn}) as follows.

A flat connection $\omega \in \Omega^1 \otimes \cEnd(\cF)$ on a sheaf $\cF$ defines a differential $D$ on $\Omega^\bullet_X \otimes_{\cO_X} \cF$ extending the de Rham differential. For $\cF = \widehat{T}_{\cO_X} \Omega^1_X$, the sheaf of endomorphisms decomposes into a product as above, and so does the differential $D$. Restricting $D$ to 
\[ \Omega^1_X \subset \widehat{T}_{\cO_X} \Omega^1_X  \subset \Omega^\bullet_X \otimes_{\cO_X} \widehat{T}_{\cO_X}\Omega^1_X \]
gives an operator of the form 
\[ \sum_k \nabla_k, \quad \nabla_k: \Omega^1_X \to \Omega^1_X \otimes_{\cO_X} T^k\left( \Omega^1_X \right), \]
where the first tensor factor of $\Omega^1$ on the right-hand side above is in graded degree 1. 

The above discussion works locally in order to identify $\JNC \cA$ with $\widehat{T}_{\cO_X} \Omega^1_X$. Without this the flat connection on $\JNC \cA$ still induces a degree 1 derivation $D$ on $\Omega^\bullet_X \otimes_{\cO_X} \JNC \cA$, which satisfies $D^2 = 0$ due to the flatness of the connection. We summarize the discussion in the following 

\begin{thm}
\label{thm:ConnCoord}
The preceding construction of $D$ endows $\Assoc_\cN(\Anh) = \JNC \cA$ with the structure of a twisted NC-connection. This induces an equivalence of categories 
\[ \GK:NC-Coord_X \stackrel{\sim}{\to} NC-Conn_X \]
and the following diagram of equivalences commutes.
\[
\xymatrix{
NC-Th_X \ar^{\JNC}@/^1pc/[r] \ar^{Coord}@/^3pc/[rr]& \ar^{\left( \-- \right)^\nabla}@/^1pc/[l] NC-Jet_X \ar^{frame}@/^1pc/[r] & NC-Coord_X \ar^{\Assoc_{\--}(\Anh)}@/^1pc/[l] \ar^-{\GK}@/^1.5pc/[dl]\\
& NC-Conn_X \ar^-{\ker D}@/^1.5pc/[ul]& 
}
\]
\end{thm}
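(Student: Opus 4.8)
The plan is to first verify that the locally-defined data $(\JNC\cA, \text{commutator ideal}, \varphi, D)$ genuinely assembles into an object of $NC\text{-}Conn_X$, and then to check that the assignment $\cN \mapsto \GK(\cN)$ is functorial, fully faithful, and essentially surjective — though in fact, once the first point is done, the equivalence can be deduced cheaply by a two-out-of-three argument against the already-established equivalences. Concretely: set $\cT = \JNC\cA = \Assoc_\cN(\Anh)$, let $\cJ \subset \cT$ be the sheaf of two-sided ideals which is locally the commutator ideal (equivalently, the preimage of the augmentation ideal's commutator filtration start $M_1$; it is canonically defined since it is the kernel of the abelianization $\JNC\cA \to \cJ\cO_X$ of Proposition \ref{prop:JNC}). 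The graded isomorphism $\varphi: \bigoplus_k \cJ^k/\cJ^{k+1} \xrightarrow{\sim} T_{\cO_X}\Omega^1_X$ comes from Proposition \ref{prop:globHn}: on a trivialization, $\JNC\cA \cong \widehat{T}_{\cO_X}\Omega^1_X$ and $\cJ \cong \widehat{T}_{\cO_X}^{\geq 1}\Omega^1_X$, whose associated graded is exactly $T_{\cO_X}\Omega^1_X$; one must check this is independent of trivialization, which holds because $H_n^+$ acts on $\Anh$ preserving the augmentation ideal and acting on its associated graded through the linear part, i.e. through $GL_n$ acting standardly on $V$ and hence on $T(V)$ — so the transition functions for $\gr\cJ$ are precisely those of $T_{\cO_X}\Omega^1_X$.

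Next I would produce the derivation $D$. By Remark \ref{rem:flat} and Remark \ref{rem:connection}, the NC-Gelfand-Kazhdan structure $\theta^{NC}$ gives a flat connection $\nabla$ on $\JNC\cA$; since $\JNC\cA$ is a sheaf of $\cO_X$-algebras and $\nabla$ is induced by the action of the Lie algebra $\Assoc_\cN(\h_n)$ by \emph{derivations} of $\JNC\cA$ (this is the content of the proposition identifying $\Assoc_\cN(\h_n) \xrightarrow{\sim} \cE_\cN$), the connection $\nabla$ is a connection by algebra derivations. Hence, as recalled in the text preceding the theorem, it extends uniquely to a degree-one derivation $D$ of $\Omega^\bullet_X \otimes_{\cO_X} \JNC\cA$ restricting to $d_{dR}$ on $\Omega^\bullet_X$, with $D^2 = 0$ equivalent to flatness, which we have. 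I then need the compatibility square of Definition \ref{def:ncConn}: that $D$ restricted to $\cJ$, composed down to $\Omega^1_X \otimes \cT/\cJ \cong \Omega^1_X$, agrees with $\varphi_1$ on $\cJ/\cJ^2$. This is a local computation: on a trivialization $\JNC\cA \cong \widehat{T}_{\cO_X}\Omega^1_X$, the connection form $\omega \in \Omega^1_X \otimes \cEnd(\JNC\cA) = \prod_k \Omega^1_X \otimes \cHom(\Omega^1_X, (\Omega^1_X)^{\otimes k})$ has, by construction of the bundle of coordinate systems, $k=0$ component equal to the identity map $\Omega^1_X \to \Omega^1_X$ placed in graded degree $1$ — this is exactly the "$\alpha \otimes 1$" term in Definition \ref{defn:NC-Conn} — and matching this with $\varphi_1$ is the statement that the tautological identification of the degree-one part of $\gr\JNC\cA$ with $\Omega^1_X$ is the one used in $\varphi$. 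The higher $\nabla_k$ are then automatically of the shape required ($\nabla_k: \Omega^1_X \to \Omega^1_X \otimes T^k\Omega^1_X$) by the graded decomposition of $\cEnd$.

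Having shown $\GK$ lands in $NC\text{-}Conn_X$, functoriality is immediate: a morphism of NC-coordinate systems is an isomorphism of $H_n^+$-bundles over $\cM$, inducing an algebra isomorphism of associated $\Anh$-bundles commuting with the Gelfand-Kazhdan structures (which are canonical), hence commuting with $D$; it preserves $\cJ$ since $\cJ$ is canonical, and induces the identity on $\gr = T_{\cO_X}\Omega^1_X$ since on $\gr$ it acts through the linear part — so it is a morphism in $NC\text{-}Conn_X$. For the equivalence itself, rather than checking full faithfulness and essential surjectivity by hand, I would invoke the already-proven equivalences: diagram (\ref{eq:equiv-GK}) gives $Coord: NC\text{-}Th_X \xrightarrow{\sim} NC\text{-}Coord_X$ with quasi-inverse $\Assoc_{(-)}(\Anh)$ followed by $(-)^\nabla$, and the quoted Theorem 2.3.23 of \cite{pt} gives $\ker D: NC\text{-}Conn_X \xrightarrow{\sim} NC\text{-}Th_X$. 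It therefore suffices to check the triangle in the displayed diagram commutes, i.e. that $\ker D \circ \GK \circ Coord \cong \mathrm{id}_{NC\text{-}Th_X}$; unwinding, $\GK(Coord(\cA)) = (\JNC\cA, \cJ, \varphi, D)$ with $D$ the Gelfand-Kazhdan differential, and $\ker D = (\JNC\cA)^\nabla \cong \cA$ by Proposition \ref{prop:flatJet} — and this identification is natural. Then $\GK$ is an equivalence as a composite $\GK = (\ker D)^{-1} \circ Coord^{-1}$... more precisely, since $Coord$ and $\ker D$ are equivalences and $\ker D \circ \GK = Coord^{-1}$ up to natural isomorphism, $\GK$ is an equivalence with quasi-inverse $Coord \circ \ker D$.

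The main obstacle is the compatibility check in the middle paragraph — pinning down that the degree-zero component of the Gelfand-Kazhdan connection form is exactly the tautological "$\alpha \otimes 1$" shift, matching $\varphi_1$, rather than some twist of it. This is where the specific normalization of the bundle of coordinate systems (the fiber over $x$ being \emph{isomorphisms}, not just identifications up to the linear group) and of the Gelfand-Kazhdan structure (dual to the action of vector fields on functions) has to be used carefully; it is the noncommutative analogue of the fact, implicit in \cite{k-rozansky}, that the classical Fedosov/Gelfand-Kazhdan connection on jets has leading term the identity. Everything else is either formal (the two-out-of-three argument) or a routine unwinding of the local models already set up in Proposition \ref{prop:globHn}.
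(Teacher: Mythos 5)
Your overall strategy matches the paper's: assemble the tuple $(\JNC\cA,\ \cJ,\ \varphi,\ D)$ locally from Proposition \ref{prop:globHn} and the Gelfand--Kazhdan connection, then obtain the equivalence formally by a two-out-of-three argument against diagram (\ref{eq:equiv-GK}) and Theorem 2.3.23 of \cite{pt}, with the commuting triangle supplied by Proposition \ref{prop:flatJet}. This is exactly the route the paper takes (it says explicitly that the equivalence ``follows formally from the preceding section and the main theorem of \cite{pt}''), and in places you are more careful than the paper: you address the compatibility square in Definition \ref{def:ncConn} and the trivialization-independence of $\varphi$, whereas the paper instead produces the $\varphi_i$ from the filtered components of the $H_n$-equivariant map $\Anh\to\Onh$ and leaves the square implicit.

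The one concrete error is your canonical characterization of the ideal $\cJ$. You define it as the kernel of the abelianization $\JNC\cA\to\cJ\cO_X$, i.e.\ the commutator ideal; but that ideal is locally the two-sided ideal generated by commutators, which is contained in $\widehat{T}^{\geq 2}_{\cO_X}\Omega^1_X$ and is not $\widehat{T}^{\geq 1}_{\cO_X}\Omega^1_X$. With that choice $\cJ/\cJ^2$ is not $\Omega^1_X$, $\bigoplus_k \cJ^k/\cJ^{k+1}$ is not $T_{\cO_X}\Omega^1_X$, and the constraint on $\varphi_1$ in Definition \ref{def:ncConn} cannot even be formulated. What you need --- and what you actually use two sentences later when you write $\cJ\cong\widehat{T}^{\geq 1}_{\cO_X}\Omega^1_X$ on a trivialization --- is the augmentation ideal: the kernel of the composite $\JNC\cA\to\cJ\cO_X\to\cO_X$ of abelianization followed by evaluation of jets along the diagonal. (The parenthetical invoking $M_1$ is also off, since $M_1(A)=A$.) Once $\cJ$ is corrected to the augmentation ideal, the rest of your argument goes through as written.
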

\begin{proof}
The sheaf of algebras $\JNC \cA$ has a natural ideal sheaf locally identified with $\widehat{T}^{\geq 1}_{\cO_X} \Omega^1_X$. The preceding discussion gives a construction of $D$. Recalling definition \ref{def:ncConn}, it remains to construct an isomorphism of graded algebras 
\[ \varphi: \bigoplus_{k \geq 0} \left(\JNC \cA\right)^k/\left( \JNC \cA \right)^{k+1} \stackrel{\sim}{\to} T_{\cO_X} \Omega^1_X. \]
The map of $H_n$-modules $\Anh \to \Onh$ induces a map $\JNC \cA \to \cJ \cO_X$, as required in the definition of an object of $NC-Jet_X$. The filtered components of this map give the desired maps $\varphi_i$.
\end{proof}

\section{NC-Thickenings as Reductions of Structure Group}
\label{sec:reduction}

This section gives a new criterion for the existence of NC-thickenings of a given variety $X$, and studies natural obstructions.

\subsection{Existence Results} We establish a new criterion of existence of NC-thickenings of $X$.

\begin{thm}
\label{thm:reduction}
Every $\cN \in NC-Coord_X$ arises as a reduction of structure group of $\cM$ from $G_n^+$ to $H_n^+$, and this reduction respects the natural action of $H_n$ on $\cN$ and $G_n$ on $\cM$. Equivalently, NC-coordinate systems correspond to commutative diagrams 
\[
\xymatrix{
    & X_{dR} \ar[dl] \ar^q[dr] & \\
    BH_n \ar[rr] & & BG_n
}
\]
where $q$ classifies $\cM$.
\end{thm}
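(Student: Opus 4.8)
The plan is to exploit the structural fact, already available in the excerpt, that an NC-coordinate system $\cN$ comes equipped by definition with an isomorphism $\cN \times_{H_n^+} G_n^+ \stackrel{\sim}{\to} \cM$; this is precisely the statement that $\cN$ is a reduction of structure group of $\cM$ along the inclusion $H_n^+ \into G_n^+$. So the content of the theorem is not the existence of the reduction but (a) that this reduction is compatible with the enlarged symmetry groups $H_n \subset G_n$ acting via the Gelfand-Kazhdan structures, and (b) the reformulation in terms of the commutative triangle of classifying stacks. First I would recall the two Gelfand-Kazhdan structures in play: the classical one on $\cM$, which by Proposition \ref{prop:gaitsgoryFG} promotes the classifying map $X \to BG_n^+$ to $q : X_{dR} \to BG_n$ with $\cM/G_n \cong X_{dR}$; and the noncommutative one on $\cN$ constructed in section \ref{sec:GK}, which by the Corollary there gives $\cN/H_n \cong X_{dR}$ and hence a classifying map $p : X_{dR} \to BH_n$.

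Next I would check that these two structures are compatible under the reduction, i.e. that the $\h_n$-action on $\cN$ maps to the $\g_n$-action on $\cM$ under $\cN \to \cN \times_{H_n^+} G_n^+ \cong \cM$. This is where the local computation of section \ref{sec:ncDisk} does the work: by Proposition \ref{prop:LieQuot} the square relating $H_n^+, H_n, G_n^+, G_n$ commutes and induces an isomorphism on the quotients $H_n/H_n^+ \cong G_n/G_n^+$, and by Lemma \ref{lem:DerAnh} together with Proposition \ref{prop:globHn} the embedding $\h_n \into \g_n$ is just $\widehat{T}_\cO(\Omega^1_X)\otimes\cT_X \to \widehat{\Sym}_\cO(\Omega^1_X)\otimes\cT_X$ induced by the abelianization $\Anh \to \Onh$, which is exactly the map $\JNC\cA \to \cJ\cO_X$ abelianizing the jet algebra. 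Since both Gelfand-Kazhdan structures are defined as the action of (noncommutative, resp. commutative) vector fields on jets via derivations, and the abelianization map $\JNC\cA \to \cJ\cO_X$ intertwines these derivation actions tautologically, the $\h_n$-connection on $\cN$ pushes forward to the $\g_n$-connection on $\cM$. This gives the commuting triangle: $p$ and $q$ both have source $X_{dR}$ because both bundles descend to it, and the bottom map $BH_n \to BG_n$ is induced by $H_n \into G_n$; commutativity holds because $\cN \times_{H_n} G_n \cong \cM$ as crystals, which is the $H_n$-equivariant refinement of the defining isomorphism at the $H_n^+$ level.

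For the converse direction — that a commuting triangle with $q$ fixed produces an NC-coordinate system — I would pull back the universal $H_n$-bundle along $p$ to get an $H_n$-crystal on $X$, restrict its $H_n^+$-reduction (equivalently, pull back along $X \to X_{dR} \to BH_n$) to obtain a Zariski-locally-trivial $H_n^+$-torsor $\cN$ on $X$, and use the filled triangle to identify $\cN \times_{H_n^+} G_n^+$ with the pullback of the universal $G_n^+$-bundle along $X \to BG_n^+$, which is $\cM$; local triviality descends from that of $\cM$ together with the local triviality of the quotient $G_n^+/H_n^+$ discussed after Corollary \ref{cor:reduction}. The two constructions are mutually inverse because a reduction of structure group is the same datum as a section of the associated $G_n^+/H_n^+$-bundle, and passing to $X_{dR}$ simply records that this section is flat for the Gelfand-Kazhdan connection.

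The main obstacle I anticipate is not any single hard estimate but the bookkeeping needed to make the word ``compatible'' precise at the level of $(\g, K)$-module structures rather than honest group actions: one must verify that the chosen lift of $\cT_X$ into $\cE_\cN$ maps to a lift into $\cE_\cM$, so that the splittings $\theta^{NC}$ and $\theta$ match up, and that this matching is independent of local trivializations. Because both $\theta^{NC}$ and $\theta$ are defined intrinsically as action-by-derivations maps (Proposition in \ref{sec:GK}, and \cite{g} section 3 respectively) and the abelianization $\JNC\cA \to \cJ\cO_X$ is a canonical $\cO_X$-algebra map, this compatibility is forced; the real work is just assembling it cleanly, for which I would phrase everything in the language of crystals (as in the Remark invoking \cite{gr-crys}) so that "equivariant for $H_n$" literally means "descends to $X_{dR}$," making the triangle of classifying stacks the efficient formulation.
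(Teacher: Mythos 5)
Your proposal follows essentially the same route as the paper: the reduction itself is immediate from the definition of $NC-Coord_X$ (which includes the isomorphism $\cN \times_{H_n^+} G_n^+ \cong \cM$), and the compatibility of the $H_n$- and $G_n$-actions is exactly the commutativity of the square relating $\cE_\cN \to \cE_\cM$ to $\Assoc_\cN(\h_n) \to \Assoc_\cM(\g_n)$ via the two Gelfand--Kazhdan structures, which you correctly justify by noting that the abelianization $\JNC\cA \to \cJ\cO_X$ intertwines the derivation actions. Your extra detail on the converse direction and on why the square commutes goes beyond the paper's (very terse) proof but is consistent with it.
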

\begin{proof}
The statement that $\cN$ is a reduction of $\cM$ is immediate from the definition of NC-coordinate system. This reduction intertwines the $H_n$ and $G_n$ actions by the commutativity of the diagram
\[
\xymatrix{
\cE_\cN \ar[r] \ar[d] & \cE_\cM \ar[d] \\
\Assoc_\cN(\h_n) \ar[r] & \Assoc_\cM(\g_n) 
}
\]
where the vertical arrows are the NC and classical Gelfand-Kazhdan structures.
\end{proof}

\begin{cor}
\label{cor:reduction}
A smooth variety $X$ admits an NC-thickening if and only if $\Assoc_\cM(G_n^+/H_n^+)$ has global sections. 
\end{cor}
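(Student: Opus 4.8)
The plan is to chain together the equivalences of categories that have already been established, together with Theorem \ref{thm:reduction}, to translate "$X$ admits an NC-thickening" into "$\cM$ admits a reduction of structure group from $G_n^+$ to $H_n^+$", and then to recognize the latter as the existence of a global section of the associated stack $\Assoc_\cM(G_n^+/H_n^+)$. First I would observe that by the equivalence $Coord:NC\text{-}Th_X \xrightarrow{\sim} NC\text{-}Coord_X$ of Theorem \ref{thm:ThCoord}, $X$ admits an NC-thickening if and only if the category $NC\text{-}Coord_X$ is nonempty, i.e. $\cM$ admits a reduction of structure group to $H_n^+$ compatible with the $G_n$-action (this compatibility is automatic by Theorem \ref{thm:reduction}, and in any case the existence question is insensitive to it since we are only asking whether any object exists).

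Next I would make the standard identification between reductions of structure group and sections of the associated quotient bundle. Recall that $\cM$ is a $G_n^+$-torsor over $X$; for the closed subgroup $H_n^+ \subset G_n^+$, reductions of $\cM$ to $H_n^+$ are in bijection with sections over $X$ of the associated bundle $\cM \times_{G_n^+} (G_n^+/H_n^+) = \Assoc_\cM(G_n^+/H_n^+)$, a bundle with fiber the homogeneous space $G_n^+/H_n^+$. Given a section $s$, the reduction is the preimage of the unit coset; given a reduction $\cN \subset \cM$, the section is $\cN \times_{H_n^+} \{eH_n^+\}$. This is entirely formal and works at the level of stacks, which is the generality we need since, as noted in the text after the corollary, $G_n^+/H_n^+$ must be regarded as an algebraic stack and $\Assoc_\cM(G_n^+/H_n^+)$ as a sheaf of stacks over $X$; the bijection between reductions and sections persists in this setting (a section of the quotient stack is by descent the same as a reduction of the torsor). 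Combining the two bijections: $X$ admits an NC-thickening $\iff$ $\cM$ has a reduction to $H_n^+$ $\iff$ $\Assoc_\cM(G_n^+/H_n^+)$ has a global section.

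The main subtlety — and the step I would be most careful with — is the stacky nature of $G_n^+/H_n^+$. The quotient need not be a scheme, so "reduction of structure group" and "global section" must both be interpreted in the derived/stacky sense, and one must check that the classical equivalence between them survives. Concretely, $H_n^+$-reductions of the $G_n^+$-torsor $\cM$ are classified by the groupoid of lifts of the classifying map $X \to BG_n^+$ along $BH_n^+ \to BG_n^+$, and such lifts are exactly global sections of the fiber product $X \times_{BG_n^+} BH_n^+$, which is precisely the sheaf of stacks $\Assoc_\cM(G_n^+/H_n^+)$ since $G_n^+/H_n^+ = \mathrm{pt}\times_{BG_n^+} BH_n^+$ as stacks. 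I would phrase the proof through these classifying maps to avoid worrying about representability. One should also note that $H_n^+$ being closed in $G_n^+$ (via Proposition \ref{prop:LieQuot} and the fact that automorphisms preserve the commutator filtration, so $H_n^+$ is the stabilizer of the origin under the $H_n^+$-action, cut out by algebraic conditions inside $G_n^+$) guarantees $G_n^+/H_n^+$ is a reasonable object; by Proposition \ref{prop:LieQuot} it is even identified with $G_n^+/H_n^+$ sitting inside the isomorphism $H_n/H_n^+ \cong G_n/G_n^+$, though for the corollary we only need the augmented versions. With these points addressed the corollary follows immediately from the chain of equivalences.
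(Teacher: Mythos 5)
Your proof is correct and follows exactly the route the paper intends: the paper gives no separate argument for the corollary, deriving it directly from Theorem \ref{thm:reduction} together with the equivalence $NC\text{-}Th_X \simeq NC\text{-}Coord_X$ and the standard identification of $H_n^+$-reductions of the $G_n^+$-torsor $\cM$ with sections of $\Assoc_\cM(G_n^+/H_n^+)$, interpreted stack-theoretically as you do. Your added care about the stacky quotient and the classifying-map formulation matches the paper's own remark following the corollary.
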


\begin{rem}
Note that the quotient $G_n^+/H_n^+$ is taken stack-theoretically; thus the associated bundle above gives a canonically defined sheaf of stacks on all smooth varieties capturing the existence of NC-thickenings.
\end{rem}

\begin{rem}
This corollary gives a simple geometric reason for the existence of NC-thickenings of affine schemes: $\Assoc_\cM(G_n^+/H_n^+)$ will always have global sections.
\end{rem}

\subsection{Obstruction Classes and the NC-Atiyah Class}

The notion of \emph{noncommutative Atiyah class} $\alpha_\cA \in H^1(X, \Hom(\cT_X^{\otimes 2}, \cT_X))$ associated to an NC-thickening $\cA$ is due to Kapranov \cite{k}.

\begin{defn}
The \emph{sheaf of noncommutative (or NC) 1-jets $\JNC_1 \cA$} is the quotient of $\JNC \cA$ by the pre-image of higher-order jets in $\cJ \cO$.
\end{defn}

\begin{defn}
The \emph{NC-Atiyah class $\alpha_\cA$ of an NC-thickening $\cA$} is the class of 
\[ 0 \to \cT_X \to \JNC_1 \cA \to \cT_X^{\otimes 2} \to 0 \]
in $\Ext^1_X(\cT_X^{\otimes 2}, \cT_X)$. Equivalently, $\alpha_\cA$ is the class obtained by locally identifying $\JNC_1 \cA$ with $T^{\leq 2}_{\cO_X} \Omega^1_X$. 
%
\end{defn}

An equivalent way of specifying of the map $\theta$ of section \ref{sec:GK} is to give a 1-form $\omega \in \Omega^1_{\cN/X} \otimes p^*\Assoc_\cN(\h_n)$, where $p:\cN \to X$. Using the decomposition by degree of $\cN$, we can form the affine bundle $\cN^{(2)}$ whose fibers are isomorphisms $\cA_x/\m^3 \to \Anh/\m^3$ which are the identity in degree 1 (cf. section 4.2 of \cite{k-rozansky}); the form $\omega$ then yields $\omega_2 \in \Omega^1_{\cN^{(2)}/X} \otimes p^*\cHom(\cT_X^{\otimes 2}, \cT_X)$.

By \cite{k}, lemma 4.1.1, the form $\omega_2$ gives a cohomology class 
\[ [\omega_2] \in H^1(X, \cHom(\cT^{\otimes 2}_X, \cT_X)) \cong \Ext^1(\cT_X^{\otimes 2}, \cT_X). \]
Comparing definitions, we see that $\JNC_1 \cA$ is globally trivial when pulled back to $\cN^{(2)}$, and $\omega_2$ classifies the action of invariant vector fields on $\cN^{(2)}$ on this trivial sheaf $\cO_{\cN^{(2)}} \otimes \underline{\Anh/\m^3}$. Since this action gives descent data for $\JNC_1 \cA$, we must have the following.
%

\begin{prop}
The cohomology classes $[\omega_2] = \alpha_\cA$ are equal.
\end{prop}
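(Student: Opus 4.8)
The plan is to show that the two $\Ext^1$-classes agree by exhibiting both as the obstruction to splitting one and the same short exact sequence, namely $0 \to \cT_X \to \JNC_1 \cA \to \cT_X^{\otimes 2} \to 0$, pulled back appropriately. The class $\alpha_\cA$ is defined directly as the class of this sequence. So the entire content is to recognize that $[\omega_2]$ is the class of the \emph{same} extension. First I would recall that, by Theorem~\ref{thm:reduction} (or already by the construction of the NC-Gelfand--Kazhdan structure), the connection $\theta^{NC}$ extends the $H_n^+$-action on $\cN$ to an $\h_n$-action; truncating in degrees $\leq 2$, this gives an action of invariant vector fields on $\cN^{(2)}$ on the globally trivial sheaf $\cO_{\cN^{(2)}} \otimes \underline{\Anh/\m^3}$, and this action is exactly the descent datum that reconstructs $\JNC_1 \cA$ on $X$ from the trivial bundle on $\cN^{(2)}$. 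This is already observed in the paragraph preceding the proposition, so I can invoke it.

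The key steps, in order: (i) Pull the sequence $0 \to \cT_X \to \JNC_1 \cA \to \cT_X^{\otimes 2} \to 0$ back to $\cN^{(2)}$; by the remark that $\JNC_1\cA$ trivializes there, the pulled-back sequence splits canonically, the splitting being given by the tautological trivialization in degree $\leq 2$ that is the identity in degree $1$. (ii) Apply Kapranov's lemma 4.1.1 from \cite{k}: a sheaf on $X$ which becomes trivial on the affine bundle $\cN^{(2)}$, together with the $1$-form encoding the action of invariant vector fields on the trivialization, has its isomorphism class computed by the cohomology class of that $1$-form in $H^1(X, \cHom(\cT_X^{\otimes 2}, \cT_X))$; here that $1$-form is precisely $\omega_2$ by its construction from $\omega = \theta^{NC}$ via the degree-$\leq 2$ truncation. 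Hence $[\omega_2]$ equals the class of the extension $\JNC_1 \cA$, i.e.\ $\alpha_\cA$. (iii) Check the sign/degree bookkeeping: the degree-$1$ part of $\omega$ is the $\cHom(\cT_X, \cT_X)$-component which is the identity and contributes nothing, so the first nontrivial piece is the $\cHom(\cT_X^{\otimes 2}, \cT_X)$-component $\omega_2$, matching the fact that $\JNC_1\cA$ is an extension by $\cT_X$ of $\cT_X^{\otimes 2}$ precisely in this bidegree.

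The main obstacle is step (ii): making precise that the cohomology class extracted from the connection $1$-form $\omega_2$ under Kapranov's lemma is literally the class of the extension $\JNC_1 \cA$, rather than merely an invariant attached to it. Concretely, one must match two descriptions of the same $H^1$: the \v{C}ech description coming from choosing local trivializations of the torsor $\cN^{(2)} \to X$ and comparing the induced local splittings of the sequence (which gives $\alpha_\cA$ as a \v{C}ech $1$-cocycle in $\cHom(\cT_X^{\otimes 2}, \cT_X)$), and the description of $[\omega_2]$ obtained by restricting the globally-defined relative $1$-form on $\cN^{(2)}$ to these local sections. That the transition functions of $\cN^{(2)}$ act on the degree-$2$ part of $\Anh/\m^3$ exactly by the exponential of the relevant component of $\omega$ is what ties the two together; this is essentially the content of \cite{k}, lemma 4.1.1, so once it is invoked the equality is immediate. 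I would therefore present the proof as: pull back to $\cN^{(2)}$, note the canonical splitting there, apply Kapranov's lemma to identify the descent-theoretic obstruction with $[\omega_2]$, and conclude $[\omega_2] = \alpha_\cA$.
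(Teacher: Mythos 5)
Your argument is essentially identical to the paper's: the paper "proves" this proposition via the paragraph immediately preceding it, which observes that $\JNC_1\cA$ trivializes on the affine bundle $\cN^{(2)}$, that $\omega_2$ records the action of invariant vector fields on that trivialization, and that this action is the descent datum for $\JNC_1\cA$, so Kapranov's lemma 4.1.1 identifies $[\omega_2]$ with the extension class $\alpha_\cA$. Your additional care in step (ii) about matching the \v{C}ech cocycle of local splittings with the restriction of the relative $1$-form is a reasonable elaboration of the same argument, not a different route.
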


\begin{rem}
This is fully analogous to the commutative situation: in \cite{k-rozansky}, example 4.2.2, it is observed that the usual Atiyah class of $\cT_X$ arises as the ``degree 2'' part of the commutative Gelfand-Kazhdan structure.
\end{rem}

\begin{rem}
Theorem 4.5.3 in \cite{k} says roughly that 1-smooth thickenings extend to 2-smooth thickenings precisely when the second $A_\infty$-compatibility condition is satisfied by the Atiyah class. See also the discussion of $A_\infty$-spaces in section 5.4 of \cite{pt}, analogous to the $L_\infty$-spaces of \cite{c-w2} and \cite{k-rozansky}. The broader relationship between coordinate systems and Koszul duality suggested by this pattern will be the topic of future work.
\end{rem}


\section{Degeneration to the Associated Graded and Lower Central Series}

This section recalls some results of the paper \cite{jo} and gives a categorification of the main theorem of that paper.

\subsection{The Lower Central Series Filtration}

We restate a stronger version of lemma 4.25 of \cite{jo}, which follows from the original argument:
\begin{prop}
The graded algebra $gr_M(\Anh)$ is an algebra object in $\QC(BG_n)$.
\end{prop}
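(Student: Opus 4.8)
The plan is to show that the $G_n$-action on $\Anh$ (by algebra automorphisms) descends to the associated graded $gr_M(\Anh) = \bigoplus_k M_k(\Anh)/M_{k+1}(\Anh)$, and that with respect to this action the multiplication map is $G_n$-equivariant; equivalently, that the multiplication is a morphism in the category $\QC(BG_n) = \Rep(G_n)$. The first ingredient is already recorded in the excerpt: algebra automorphisms automatically preserve the commutator filtration (see the proof of Proposition~\ref{prop:LieQuot}), and hence preserve each $L_k(\Anh)$ and each ideal $M_k(\Anh) = \Anh L_k(\Anh)$. Therefore $G_n$ acts on each subquotient $M_k/M_{k+1}$, giving the graded vector space $gr_M(\Anh)$ the structure of a $G_n$-representation. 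Since $H_n$ (and a fortiori $G_n$, via the identification $\Anh/M_2 \cong \Onh$ and Proposition~\ref{prop:LieQuot}) acts through a pro-algebraic group, one should check this action is algebraic, i.e. that each graded piece is a union of finite-dimensional algebraic $G_n$-subrepresentations; this is where one invokes that $gr_M(\Anh)$ is built from finitely generated pieces (it is a quotient of tensor powers of $V = \Anh/M_2$, on which $G_n$ acts algebraically in the pro sense).

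Next I would verify that the induced multiplication $gr_M(\Anh) \otimes gr_M(\Anh) \to gr_M(\Anh)$ is $G_n$-equivariant. This is immediate once one knows the filtration is multiplicative, $M_i \cdot M_j \subseteq M_{i+j}$ (or whatever the correct additivity is for the $M$-filtration — this is exactly the content of lemma 4.25 of \cite{jo} that the proposition is quoting and strengthening), because then the multiplication on the associated graded is induced by the $G_n$-equivariant multiplication on $\Anh$ itself, and passing to subquotients of $G_n$-stable pieces preserves equivariance. The unit $\C \to gr_M(\Anh)$ is the inclusion of the degree-zero part $\Anh/M_2 = \Onh$'s scalars, which is $G_n$-fixed since $G_n$ acts by augmented-or-not algebra automorphisms fixing $\C$. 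So $(gr_M(\Anh), \cdot, 1)$ is an associative unital algebra object in $\Rep(G_n) = \QC(BG_n)$.

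The main obstacle, and the reason the proposition cites the \cite{jo} argument rather than treating it as formal, is the interaction of the $G_n$-action with the \emph{grading} — one must confirm that $G_n$ preserves the grading on $gr_M(\Anh)$ (not merely the underlying algebra), i.e. that automorphisms of $\Anh$, which in general do not preserve the naive word-length grading on the free algebra, nevertheless preserve the filtration degree $k$ in $M_k/M_{k+1}$. This holds precisely because the $M_k$-filtration is intrinsic (defined by iterated commutators and ideal generation, hence automorphism-stable term by term), in contrast to the word grading. Once this is granted the rest is bookkeeping. I would therefore structure the proof as: (i) recall that algebra automorphisms preserve each $M_k$, hence act on each $M_k/M_{k+1}$; (ii) note the action is pro-algebraic since it factors through the algebraic action on the finitely-generated building blocks; (iii) invoke multiplicativity of the $M$-filtration (the strengthened lemma 4.25 of \cite{jo}) to see the multiplication on $gr_M$ is $G_n$-equivariant; (iv) conclude $gr_M(\Anh)$ is an algebra object in $\QC(BG_n)$.
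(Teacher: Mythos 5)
There is a genuine gap, and it sits exactly at the point your argument treats as automatic. The group $G_n$ in the statement is $\Aut(\Onh)$, the automorphisms of the \emph{commutative} formal disk; it does not act on $\Anh$ by algebra automorphisms at all. The group that acts on $\Anh$, and hence (by your correct observation that automorphisms preserve each $M_k$) on each subquotient $M_k/M_{k+1}$, is $H_n = \Aut(\Anh)$. What your steps (i)--(iv) actually establish is that $gr_M(\Anh)$ is an algebra object in $\QC(BH_n)$, which is essentially formal. The proposition claims the strictly stronger statement that this structure is defined over $\QC(BG_n)$, and your parenthetical ``Since $H_n$ (and a fortiori $G_n$, via the identification $\Anh/M_2 \cong \Onh$ \dots)'' has the logic backwards: Proposition \ref{prop:LieQuot} produces a group homomorphism $H_n \to G_n$ (restriction of an automorphism of $\Anh$ to the abelianization), so a $G_n$-action would \emph{pull back} to an $H_n$-action, not the other way around. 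To promote your $H_n$-action to a $G_n$-action one must show (a) that $H_n \to G_n$ is surjective (true, by lifting the images of generators using freeness of $\Anh$), and, crucially, (b) that the kernel of $H_n \to G_n$ --- automorphisms of $\Anh$ congruent to the identity modulo $M_2$ --- acts trivially on every graded piece $M_k/M_{k+1}$. Point (b) is a nontrivial statement about the lower central series of the free algebra (it requires, e.g., control of $[M_2,\Anh]$ inside $M_3$ and its higher analogues), and it is precisely the content of lemma 4.25 of \cite{jo} that the paper is restating; the paper offers no independent proof and simply defers to that argument.

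Without (b) the proposition is not established: the $G_n$-equivariance is exactly what is needed later (Theorem \ref{thm:LCS-Cat}) to form $\cM \times_{G_n}(gr_M(\Anh)\Mod)$ from the \emph{commutative} coordinate bundle $\cM$, which is a $G_n^+$-bundle with $G_n$-structure, not an $H_n$-bundle. So the fix is to add, as the heart of the proof, the verification that automorphisms (equivalently, derivations) of $\Anh$ acting trivially on $\Anh/M_2 \cong \Onh$ act trivially on $gr_M(\Anh)$, or an explicit citation of the argument in \cite{jo} that does this; the multiplicativity and unitality bookkeeping in your steps (iii)--(iv) is fine once that is in place.
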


For $A$ the NC-thickening of a smooth affine variety $X = \spec A_{ab}$, theorem 4.26 of \cite{jo} shows 
\begin{thm}
    \label{thm:LCS}
There is an isomorphism $\Gamma\left(X, \left(\Assoc_\cM(N_k(\Anh))\right)^\nabla\right) \cong N_k(A)$.
\end{thm}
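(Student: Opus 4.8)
The plan is to run the same globalization-by-flat-sections argument used for Proposition \ref{prop:flatJet}, now applied to the $G_n$-module $N_k(\Anh)$ rather than to $\Anh$ itself, and then to invoke affineness of $X$. The essential new input is the preceding proposition (the strengthened form of \cite{jo}, Lemma 4.25), which promotes $N_k(\Anh)$ to an object of $\QC(BG_n)$; this is exactly what allows the \emph{commutative} bundle $\cM$ to globalize genuinely noncommutative data.

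First I would identify $\Assoc_\cM(N_k(\Anh))$ with $N_k(\JNC\cA)$ as sheaves with connection. Since $\cN = Coord(\cA)$ is a reduction of structure group of $\cM$ from $G_n^+$ to $H_n^+$ (Theorem \ref{thm:reduction}), and the $H_n$-action on $N_k(\Anh)$ factors through $G_n$ by the preceding proposition, the associated bundles agree: $\Assoc_\cN(N_k(\Anh)) \cong \Assoc_\cM(N_k(\Anh))$. Applying $N_k$ fiberwise to $\JNC\cA = \Assoc_\cN(\Anh)$ identifies the left-hand side with $N_k(\JNC\cA)$, and the commuting square of Gelfand-Kazhdan structures in the proof of Theorem \ref{thm:reduction} shows that the flat connection transported from $\cN$ coincides with the classical Harish-Chandra connection on $\cM$, so the identification respects $\nabla$.

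Next I would show that flat sections commute with $N_k$, i.e. $(N_k(\JNC\cA))^\nabla \cong N_k\big((\JNC\cA)^\nabla\big)$. The key point is that $\nabla$ is induced through the noncommutative Gelfand-Kazhdan structure by the action of $\h_n$, which acts by algebra derivations (remark \ref{rem:flat}); derivations preserve commutators and hence the lower central series filtration $M_k$, so $\nabla$ descends to every subquotient and the horizontal-sections functor $(-)^\nabla = \ker \nabla$ is compatible with the commutator, product, and quotient operations defining $N_k$. Applying $N_k$ to the isomorphism $\cA \stackrel{\sim}{\to} (\JNC\cA)^\nabla$ of Proposition \ref{prop:flatJet} then yields a natural map $N_k(\cA) \to (N_k(\JNC\cA))^\nabla$. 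To see it is an isomorphism I would check on stalks, where $\JNC\cA_x \cong \Anh$ and the lemma in Proposition \ref{prop:flatJet} (via \cite{jo}, Proposition 4.23) identifies the stalk of $(\JNC\cA)^\nabla$ with $\cA_x$; since $N_k$ preserves these local identifications, the stalk of $(N_k(\JNC\cA))^\nabla$ is $N_k(\cA_x)$, matching that of $N_k(\cA)$. Finally, since $X$ is affine, $\Gamma(X,-)$ is exact on quasicoherent sheaves and commutes with the finite algebraic operations defining $N_k$, so $\Gamma\big(X, N_k(\cA)\big) \cong N_k(\Gamma(X,\cA)) = N_k(A)$.

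The main obstacle is the middle step: verifying that $(-)^\nabla$ genuinely commutes with $N_k$. Because flat sections is only left exact a priori, surjectivity of the comparison map is not formal and must be extracted from the stalkwise computation, where one needs the $G_n$-equivariance of $gr_M(\Anh)$ to guarantee that the connection neither mixes filtration degrees nor obstructs lifting a flat section of a subquotient to a flat section upstairs. This is precisely where the preceding proposition does the real work, and I would expect the bulk of the argument to consist of making that compatibility precise degree by degree.
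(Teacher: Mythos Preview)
The paper does not prove this statement; it is quoted as Theorem~4.26 of \cite{jo}, with no argument given here. So there is no in-paper proof to compare your attempt against, and the result logically \emph{precedes} the machinery you invoke: Proposition~\ref{prop:flatJet} and Theorem~\ref{thm:reduction} are developed in this paper on top of \cite{jo}, and the lemma inside Proposition~\ref{prop:flatJet} already appeals to \cite{jo}, Proposition~4.23. You should check that 4.23 and 4.25 are genuinely independent of 4.26 in \cite{jo} before using them, or your reconstruction is circular.

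Setting that aside, the outline is reasonable and you correctly identify the exactness of $(-)^\nabla$ as the delicate point. The real gap, however, is the final sentence. You invoke affineness of $X$ and exactness of $\Gamma(X,-)$ on quasi-coherent sheaves to conclude $\Gamma(X, N_k(\cA)) \cong N_k(\Gamma(X,\cA)) = N_k(A)$. But neither $\cA$ nor $M_k(\cA)$ is a quasi-coherent $\cO_X$-module: they are sheaves of noncommutative rings and two-sided ideals, and the presheaf $U \mapsto N_k(\cA(U))$ is not obviously even a sheaf. What is actually needed is that $N_k(\cA)$, unlike $\cA$ itself, carries a natural $\cO_X$-module structure (because $A_{ab}$ becomes central in $gr_M(A)$) and is quasi-coherent; only then does Serre vanishing on the affine $X$ let you pass $\Gamma$ through the quotient $M_k/M_{k+1}$. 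Establishing that quasi-coherence is not routine bookkeeping---it is essentially equivalent to the $G_n$-equivariance of $N_k(\Anh)$ asserted in the preceding proposition, and is where the substance of the \cite{jo} theorem resides. Your last step therefore assumes the conclusion rather than deriving it.
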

Moreover, the fact that this isomorphism is canonical gives the sheaf-theoretic analogue for non-affine $X$ and NC-thickenings $\cA$.

\subsection{Degeneration}

\begin{defn}
Let $K_n = \Aut(gr_M(\Anh))$. Define the $K_n$-bundle $\widetilde{\cM}$ as the bundle of graded $\cO_X$-algebra isomorphisms from $gr_M(\cA)$ to $gr_M\left(\underline{\Anh}\right)$.
\end{defn}

Combining theorems \ref{thm:LCS} and \ref{thm:reduction}, we have the following 
\begin{cor}
    $\Assoc_\cN(K_n) \cong \widetilde{\cM}$ for any $\cN \in NC-Coord_X$. 
\end{cor}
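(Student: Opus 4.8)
The plan is to produce a natural morphism $\Assoc_\cN(K_n) \to \widetilde{\cM}$ of $K_n$-torsors and observe that any morphism of torsors is an isomorphism. First I would record two elementary properties of $gr_M$: it is a functor on NC-complete algebras with respect to all algebra homomorphisms (every algebra map preserves the filtrations $M_k$, as in the proof of proposition \ref{prop:LieQuot}), and it commutes with Zariski localization. Hence applying $gr_M$ fiberwise to the sheaf of $\cO_X$-algebras $\JNC \cA$ gives a well-defined sheaf $gr_M(\JNC \cA)$ of graded $\cO_X$-algebras, canonically identified with $gr_M(\cA)$ via proposition \ref{prop:JetFibers}, and Zariski-locally isomorphic to $\cO_X \otimes_{\underline\C} \underline{gr_M(\Anh)}$ by proposition \ref{prop:JNC}; in particular $\widetilde{\cM}$ is a $K_n$-torsor, locally trivial in the Zariski topology. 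Now $\cN = frame(\JNC \cA)$ is the $H_n^+$-torsor of augmented $\cO_X$-algebra isomorphisms $\JNC \cA \xrightarrow{\sim} \cO_X \otimes \underline{\Anh}$; post-composing with $gr_M$ sends such an isomorphism to a graded $\cO_X$-algebra isomorphism $gr_M(\cA) \xrightarrow{\sim} \cO_X \otimes \underline{gr_M(\Anh)}$, i.e. to a point of $\widetilde{\cM}$. By functoriality of $gr_M$ the resulting map $\cN \to \widetilde{\cM}$ intertwines the $H_n^+$-action on $\cN$ with the $H_n^+$-action on $\widetilde{\cM}$ obtained by restricting the tautological $K_n$-action along $H_n^+ \to K_n = \Aut(gr_M(\Anh))$, so it factors through a $K_n$-equivariant map $\Assoc_\cN(K_n) = \cN \times_{H_n^+} K_n \to \widetilde{\cM}$, which is therefore an isomorphism.

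To see that this is the promised consequence of theorems \ref{thm:LCS} and \ref{thm:reduction}, I would reinterpret the construction through $\cM$. By the definition of an NC-coordinate system together with theorem \ref{thm:reduction} we have $\cM \cong \cN \times_{H_n^+} G_n^+$. Since $gr_M(\Anh)$ is an algebra object of $\QC(BG_n)$ whose underlying $H_n$-structure is the one coming from functoriality of $gr_M$, the map $H_n^+ \to K_n$ factors as $H_n^+ \to G_n^+ \to K_n$, and hence $\Assoc_\cN(K_n) \cong \cM \times_{G_n^+} K_n = \Assoc_\cM(K_n)$. Under the standard associated-bundle dictionary, $\Assoc_\cM(K_n)$ is exactly the frame bundle of graded $\cO_X$-algebra isomorphisms of $\Assoc_\cM(gr_M(\Anh))$, and $\Assoc_\cM(gr_M(\Anh)) \cong gr_M(\Assoc_\cN(\Anh)) = gr_M(\JNC \cA) \cong gr_M(\cA)$ using theorem \ref{thm:reduction} once more; so this frame bundle is $\widetilde{\cM}$. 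Theorem \ref{thm:LCS} (and the sheaf-theoretic version noted after it) is the corresponding statement after passing to flat global sections, and recovers the compatibility of $\widetilde{\cM}$ with the Gelfand--Kazhdan structures.

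The step I expect to be the actual obstacle is the compatibility used above: that the $H_n^+$-action on $gr_M(\Anh)$ defined by functoriality of $gr_M$ factors through $G_n^+$, equivalently that an augmented automorphism of $\Anh$ inducing the identity on $\Onh = \Anh/M_2$ acts trivially on $gr_M(\Anh)$. This is not formal — it is precisely the content of the main theorem of \cite{jo}, namely that the lower central series quotients of $\Anh$ are functorially expressible in terms of differential forms on the formal disk, which is exactly what underlies the proposition that $gr_M(\Anh) \in \QC(BG_n)$. Granting that input, the remainder is bookkeeping with associated bundles and the observation that a $K_n$-equivariant map of $K_n$-torsors is an isomorphism.
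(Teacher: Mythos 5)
Your proposal is correct and follows the same route the paper intends: the paper offers no written argument beyond ``combining theorems \ref{thm:LCS} and \ref{thm:reduction},'' and your second paragraph is exactly that combination, with the key non-formal input (that the $H_n^+$-action on $gr_M(\Anh)$ factors through $G_n^+\to K_n$, i.e.\ Proposition 6.1, imported from \cite{jo}) correctly identified. The direct torsor-map construction in your first paragraph is a harmless reformulation of the same argument, modulo the paper's own looseness about whether $\widetilde{\cM}$ is built from $gr_M(\cA)$ or from its sheaf of jets.
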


\subsection{Categorical Geometry of Lower Central Series Algebras}

We will show that the category of modules over the associated graded algebra $gr_M(A)$ is associated to the bundle of coordinate systems from $gr_M(\Anh)$-mod. Throughout this section we consider differential graded categories of modules.

It is a general fact that if a group $G$ acts on an algebra $A$, then $A\Mod$ is a $G$-category \cite{fg}. The following result appears in section 3.2.5 of \cite{g}.
\begin{lem}
\label{lem:shvalg}
Let $A \in Alg(\QC(BG))$, and $\cP \to X$ a $G$-bundle. Denote $\cA = \Assoc_\cP(A)$. Then 
\[ \Gamma(X, \Assoc_\cP(A\Mod)) \cong \cA\Mod(\QC(X)). \]
\end{lem}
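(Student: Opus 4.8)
\textbf{Proof plan for Lemma \ref{lem:shvalg}.}

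The plan is to reduce the global statement to a local one and then glue. First I would choose a Zariski cover $\{U_i\}$ of $X$ trivializing the $G$-bundle $\cP$; over each $U_i$ we get $\cP|_{U_i} \cong U_i \times G$, and hence a trivialization of $\cA|_{U_i} \cong A \otimes_\C \cO_{U_i}$ as an $\cO_{U_i}$-algebra. Correspondingly, $\Assoc_\cP(A\Mod)|_{U_i}$ is the constant sheaf of categories $A\Mod$, and a section of $\Assoc_\cP(A\Mod)$ over $U_i$ is the same as an object of $A\Mod(\QC(U_i)) = (A \otimes \cO_{U_i})\Mod$. So over each chart the two sides agree essentially tautologically, and the content of the lemma is that the descent data on the two sides match.

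The key point is that the transition functions are governed by the same cocycle on both sides. On overlaps $U_{ij}$ the bundle $\cP$ is glued by transition maps $g_{ij}: U_{ij} \to G$, and since $G$ acts on $A$, these induce automorphisms of $A \otimes \cO_{U_{ij}}$; this is exactly the gluing data producing $\cA$ out of the local pieces $A \otimes \cO_{U_i}$. On the module side, the same $g_{ij}$ act on $A\Mod$ (this is the statement, cited from \cite{fg}, that $A\Mod$ is a $G$-category), and thus glue the constant sheaves of categories $A\Mod$ over the $U_i$ into $\Assoc_\cP(A\Mod)$; taking global sections amounts to taking objects $M_i \in (A\otimes\cO_{U_i})\Mod$ together with isomorphisms $M_i|_{U_{ij}} \cong (g_{ij})_* M_j|_{U_{ij}}$ satisfying cocycle compatibility on triple overlaps. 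But an object of $\cA\Mod(\QC(X))$ is, by faithfully flat (here, Zariski) descent for quasi-coherent sheaves, precisely such a collection: a module $M_i$ over $\cA|_{U_i}$ on each chart together with gluing isomorphisms over $U_{ij}$, and the $\cA$-module structure on the glued object is exactly the one twisted by $g_{ij}$. Carrying out this comparison of cocycles, and checking it is functorial in $M$ so as to produce an equivalence of categories rather than just a bijection on objects, is the routine part.

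The main obstacle is bookkeeping in the differential graded / higher-categorical setting: as flagged before the statement, these are dg-categories of modules, so ``gluing'' means a homotopy limit over the Čech diagram of the cover, not a strict limit, and one must know that $\Assoc_\cP(-)$ and $\Gamma(X,-)$ commute appropriately with such limits and that the descent statement for quasi-coherent sheaves holds in the derived sense. I would handle this by invoking the formalism of sheaves of (dg-)categories over $X$ and Zariski (equivalently, by \cite{fg}, flat) descent for $\QC$, so that both $\Gamma(X,\Assoc_\cP(A\Mod))$ and $\cA\Mod(\QC(X))$ are computed as the same homotopy limit $\lim_{\Delta}$ over the Čech nerve of $\{U_i\}$, with identical transition data; the equivalence then follows from uniqueness of limits. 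The only genuinely delicate check is that the $G$-action on $A\Mod$ used to form $\Assoc_\cP(A\Mod)$ is compatible, under the functor $M \mapsto M$, with the $\cO$-linear twist used to form $\cA = \Assoc_\cP(A)$ and its category of modules — i.e. that the equivalence $A\Mod(\QC(U)) \simeq (\text{sections of }\Assoc_\cP(A\Mod)\text{ over }U)$ for trivializing $U$ is natural in $U$ — and this is precisely the cited content of section 3.2.5 of \cite{g}, which we take as given.
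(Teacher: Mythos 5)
Your proposal is correct in outline, but it takes a different route from the paper. The paper does not choose a trivializing cover at all: it works with the descent diagram along the bundle projection $\cP \to X$ itself, observing that $\Gamma(X,\Assoc_\cP(A\Mod))$ is by definition (following section 3.2.5 of \cite{g}) the cotensor product $\QC(\cP)\otimes_{\QC(G)}(A\Mod)$, i.e.\ the equalizer of the two functors $act^*, triv^*$ from $\QC(\cP)\times(A\Mod)$ to $\QC(\cP)\otimes\QC(G)\otimes(A\Mod)$, and then checking that $\cA\Mod(\QC(X))$ satisfies the universal property of that equalizer (an object of $\cA\Mod(\QC(X))$ pulls back to $\cP$, where $\cA$ trivializes canonically to $A\otimes\cO_\cP$, yielding an $A$-module in $\QC(\cP)$ with the requisite equivariance data). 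Your argument instead runs \v{C}ech descent over a Zariski cover $\{U_i\}$ trivializing $\cP$ and matches transition cocycles on the two sides. Both work, but they buy different things: the paper's argument is cover-free, is essentially a definition-unwinding in the sheaves-of-categories formalism, and sidesteps the higher-coherence bookkeeping entirely, since the cotensor product is a single (co)simplicial limit; your argument is more concrete and elementary, but it genuinely relies on Zariski-local triviality of $\cP$ and, as you rightly flag, the gluing of dg-categories over a cover requires a full homotopy-coherent system of compatibilities rather than just a 1-cocycle condition on triple overlaps --- your fix of exhibiting both sides as the same homotopy limit over the \v{C}ech nerve is the right way to discharge this, and at that point your proof and the paper's differ only in which augmented simplicial resolution of $X$ they descend along.
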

\begin{proof}
It is easy to see that $\cA\Mod(\QC(X))$ equalizes the appropriate cotensor product diagram:
\[
\xymatrix{
    Eq \ar[r] & \QC(\cP) \times (A\Mod) \ar^-{act^*}@<1ex>[r] \ar_-{triv^*}@<-1ex>[r] & \QC(\cP) \otimes \QC(G) \otimes (A\Mod) \\
              & \cA\Mod(\QC(X)) \ar@{-->}[ul] \ar[u]
}
\]
\end{proof}

We will need a slight extension of the notion of associated bundle, owing to the natural action of $G_n$ on the $G_n^+$-bundle $\cM$. Denote by $\cM \times_{G_n}(N)$ the quotient of $\cM \times N$ by the diagonal $G_n$-action. 

\begin{thm}
    \label{thm:LCS-Cat}
There is an equivalence of categories 
\[ \Gamma(X, \cM \times_{G_n} (gr_M(\Anh)\Mod)) \cong gr_M(A)\Mod. \]
\end{thm}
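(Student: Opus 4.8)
The plan is to deduce Theorem \ref{thm:LCS-Cat} from the equivalences already assembled in the paper together with Lemma \ref{lem:shvalg}, being careful about the fact that the relevant group is $G_n$ (acting on the $G_n^+$-bundle $\cM$) rather than $G_n^+$. First I would observe that, by the preceding corollary, for any $\cN \in NC\text{-}Coord_X$ we have $\widetilde{\cM} \cong \Assoc_\cN(K_n)$, and moreover the NC-Gelfand--Kazhdan structure of section \ref{sec:GK} extends the $H_n^+$-action on $\cN$ to an $H_n$-action; combining this with Theorem \ref{thm:reduction} (which identifies $\cM$ with the reduction data $X_{dR} \to BG_n$) lets me rephrase the left-hand side entirely in terms of $X_{dR}$. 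Concretely, since $gr_M(\Anh)$ is an algebra object in $\QC(BG_n)$ and the classical Gelfand--Kazhdan structure makes $\cM$ into a $G_n$-crystal, i.e. descends to a $G_n$-bundle on $X_{dR}$, the expression $\cM \times_{G_n}(gr_M(\Anh)\Mod)$ is literally the associated bundle of the $G_n$-category $gr_M(\Anh)\Mod$ along the $G_n$-bundle on $X_{dR}$ classified by $q$.

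Next I would apply Lemma \ref{lem:shvalg} with $G = G_n$, $A = gr_M(\Anh) \in Alg(\QC(BG_n))$, and $\cP$ the $G_n$-bundle on $X_{dR}$ classified by $q$. This gives
\[
\Gamma\!\left(X_{dR},\, \cP \times_{G_n}(gr_M(\Anh)\Mod)\right) \;\cong\; \widetilde{\cA}\text{-}\mathrm{mod}(\QC(X_{dR})),
\]
where $\widetilde{\cA} = \Assoc_\cP(gr_M(\Anh))$ is the $\cO_{X_{dR}}$-algebra pulled back along $q$. The point of working over $X_{dR}$ is that $\QC(X_{dR}) = \cD(X)$, and pulling $\widetilde{\cA}$ back to $X$ along $X \to X_{dR}$ recovers $\Assoc_\cM(gr_M(\Anh))$ together with its flat connection $\nabla$ coming from the Gelfand--Kazhdan structure; so $\widetilde{\cA}\text{-}\mathrm{mod}(\QC(X_{dR}))$ is the category of $\Assoc_\cM(gr_M(\Anh))$-modules in $\cD(X)$, equivalently modules equipped with a compatible flat connection. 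Then I invoke Theorem \ref{thm:LCS} (in its canonical, sheafified form) which identifies $\Gamma(X,(\Assoc_\cM(gr_M(\Anh)))^\nabla)$ with $gr_M(A)$ as algebras; taking flat sections is an equivalence from modules-with-flat-connection over $\Assoc_\cM(gr_M(\Anh))$ to $gr_M(A)$-modules over $\QC(X)$, which yields the right-hand side $gr_M(A)\text{-}\mathrm{mod}$.

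I would organize the argument as a chain of equivalences:
\[
\Gamma(X, \cM \times_{G_n}(gr_M(\Anh)\Mod)) \;\cong\; \Gamma(X_{dR}, \cP\times_{G_n}(gr_M(\Anh)\Mod)) \;\cong\; \widetilde{\cA}\text{-}\mathrm{mod}(\QC(X_{dR})) \;\cong\; gr_M(A)\text{-}\mathrm{mod}(\QC(X)),
\]
where the first equivalence is the descent along $\cM \to \cP$ furnished by the Gelfand--Kazhdan structure (the diagonal $G_n$-quotient in $\cM\times_{G_n}(-)$ is precisely what encodes the crystal structure), the second is Lemma \ref{lem:shvalg}, and the third is ``take flat sections'' combined with the canonical isomorphism of Theorem \ref{thm:LCS}. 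The main obstacle I anticipate is the first step: making precise that $\cM \times_{G_n}(gr_M(\Anh)\Mod)$, with its diagonal-quotient definition, really is the associated bundle over $X_{dR}$ to which Lemma \ref{lem:shvalg} applies — that is, checking that the Gelfand--Kazhdan/crystal structure on $\cM$ descends the construction and that taking global sections over $X$ agrees with taking them over $X_{dR}$ for this sheaf of categories (the analogue of the statement that $\QC(X_{dR})$-valued global sections compute $\cD$-modules). Everything else is a formal concatenation of results already in the paper.
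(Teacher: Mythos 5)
Your proposal is correct and follows essentially the same route as the paper: both arguments rest on exactly the two ingredients Lemma \ref{lem:shvalg} and Theorem \ref{thm:LCS}, with the Gelfand--Kazhdan/crystal structure on $\cM$ accounting for the passage from $G_n^+$ to $G_n$. The only difference is one of bookkeeping --- you apply Lemma \ref{lem:shvalg} over $X_{dR}$ and then take flat sections of the resulting module category, whereas the paper first identifies the associated algebra $\cM \times_{G_n} gr_M(\Anh)$ with $gr_M(A)$ via Theorem \ref{thm:LCS} and then invokes the lemma; your version makes explicit the descent step that the paper folds into the diagonal $G_n$-quotient.
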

\begin{proof}
First recall that $\cM \times_{G_n^+} gr_M(\Anh) \cong \cJ gr_M(\Anh)$ by theorem \ref{thm:LCS}, and therefore $\cM \times_{G_n} gr_M(\Anh) \cong gr_M(\Anh)$. The theorem then follows from lemma \ref{lem:shvalg}.
\end{proof}

\begin{rem}
This can be viewed as a categorical analogue of theorem 4.26 of \cite{jo}. Intuitively, we think of $\cM \times_{G_n} (gr_M(\Anh)\Mod)$ as the flat sections of the sheaf of categories $\Assoc_\cM(gr_M(\Anh)\Mod)$.
\end{rem}

\section{Relation to $\cD$-Modules}

In section 3.2 of \cite{pt}, it is observed that there is a functor from the category of $\cD$-modules on $X$ to the category of modules over any NC-thickening $\cA$ of $X$. We wish to interpret this in terms of the appearance of the de Rham stack above.

\subsection{Construction of Functor}

Let $(\cN, \nabla)$ be an NC-coordinate system on $X$. Then recall \ref{sec:GK} that $\cN/H_n \cong X_{dR}$. Results of section \ref{sec:reduction} imply that we have a commutative diagram 
\[
\xymatrix{
    & X_{dR} \ar_p[dl] \ar^q[dr] & \\
    BH_n \ar^\pi[rr] & & BG_n
}
\]
Recall \cite{g-shvcat} that, for a map $f:X \to Y$, one has the functor 
\begin{align*}
\ind_f: \QC(Y)\Mod &\to \QC(X)\Mod \\
\cC &\mapsto \QC(X) \otimes_{\QC(Y)} \cC
\end{align*} 
so in particular $\ind_f(\QC(Y)) \cong \QC(X)$. Thus we can write 
\[ \cD(X) = \QC(X_{dR}) \cong \ind_q(\QC(BG_n)) \cong \ind_p\left( \ind_\pi(\QC(BG_n)) \right). \]
Moreover, by lemma \ref{lem:shvalg}, 
\[ \JNC \cA\Mod(\QC(X_{dR})) \cong \Gamma\left(X_{dR}, \ind_p\left(\Anh\Mod(\QC(BH_n))\right)\right). \]
Note that $\Gamma(X_{dR}, \QC(X_{dR})) \cong \QC(X_{dR})$.
We now have the following 
\begin{prop}
\label{prop:Dmod}
There are functors 
\[ \ind_\pi \QC(BG_n) \stackrel{\sim}{\to} \QC(BH_n) \to \Anh\Mod(\QC(BH_n)) \]
such that applying $\Gamma(X_{dR}, \ind_p(\--))$ induces a functor 
\begin{align*} 
\QC(X_{dR}) \cong \Gamma\left(X_{dR}, \ind_p\left( \ind_\pi \QC(BG_n) \right) \right)&\to \ind_p \left( \Anh\Mod(\QC(BH_n)) \right) \\
&\cong \JNC\cA\Mod(\QC(X_{dR})). 
\end{align*}
Moreover, the functor of flat sections $(\--)^\nabla:\QC(X_{dR}) \to Shv_\C(X)$ induces a functor 
\[ \JNC \cA\Mod(\QC(X_{dR})) \to \cA\Mod(Shv_\C(X)), \]
where $Shv_\C(X)$ denotes the category of sheaves of $\C$-vector spaces on $X$.
\end{prop}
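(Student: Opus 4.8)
The plan is to assemble the asserted composite out of three ingredients that have essentially already been isolated in the paragraphs preceding the statement: the tautological equivalence $\ind_\pi \QC(BG_n) \cong \QC(BH_n)$, the free-module functor attached to the algebra object $\Anh \in Alg(\QC(BH_n))$, and the identification of $\Gamma(X_{dR}, \ind_p(-))$ of the relevant categories supplied by Lemma \ref{lem:shvalg}. First I would note that the equivalence $\ind_\pi \QC(BG_n) \stackrel{\sim}{\to} \QC(BH_n)$ is immediate, since by definition $\ind_\pi(\cC) = \QC(BH_n) \otimes_{\QC(BG_n)} \cC$ and hence $\ind_\pi(\QC(BG_n)) \cong \QC(BH_n)$. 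The second functor $\QC(BH_n) \to \Anh\Mod(\QC(BH_n))$ is the free-module functor $V \mapsto \Anh \otimes_\C V$; it makes sense because $\Anh$ carries an $H_n$-action by algebra automorphisms and is therefore an algebra object of $\QC(BH_n)$, the noncommutative counterpart of the $G_n$-algebra $\Onh$ from Proposition \ref{prop:gaitsgoryFG}. (Note that $H_n$ does not preserve the augmentation, so no equivariant augmentation $\Anh \to \C$ is available; the free-module functor is the only natural choice.)

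Next I would apply $\Gamma(X_{dR}, \ind_p(-))$ to this composite. Since $q = \pi \circ p$ and induction functors compose, $\QC(X_{dR}) \otimes_{\QC(BH_n)}\bigl(\QC(BH_n)\otimes_{\QC(BG_n)}(-)\bigr) \cong \QC(X_{dR})\otimes_{\QC(BG_n)}(-)$, so $\QC(X_{dR}) = \cD(X) \cong \ind_q(\QC(BG_n)) \cong \ind_p(\QC(BH_n))$, and $\Gamma(X_{dR},\ind_p(\QC(BH_n))) \cong \QC(X_{dR})$ as noted in the text. On the other side, Lemma \ref{lem:shvalg} applied to the group $H_n$, the $H_n$-torsor over $X_{dR}$ classified by $p$, and the algebra object $\Anh$ — whose associated algebra over $X_{dR}$ is exactly $\JNC\cA$ with its flat connection, by Proposition \ref{prop:JetCoord} together with the NC-Gelfand-Kazhdan structure of Section \ref{sec:GK} — identifies $\Gamma(X_{dR}, \ind_p(\Anh\Mod(\QC(BH_n)))) \cong \JNC\cA\Mod(\QC(X_{dR}))$. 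Applying $\Gamma(X_{dR},\ind_p(-))$ to the free-module functor then yields the asserted functor $\QC(X_{dR}) \to \JNC\cA\Mod(\QC(X_{dR}))$, which unwinds to $M \mapsto \JNC\cA \otimes_{\cO_X} M$, matching the construction of \cite{pt}, Section 3.2.

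For the last assertion I would invoke the flat-sections functor $(-)^\nabla : \QC(X_{dR}) \to Shv_\C(X)$, computed by the de Rham complex $\Omega^\bullet_X \otimes_{\cO_X} (-)$ (equivalently $\cHom_{\cD_X}(\cO_X, -)$). This functor is lax symmetric monoidal, being a right adjoint, hence a limit; therefore it carries the algebra object $\JNC\cA$ to the algebra $(\JNC\cA)^\nabla$ and $\JNC\cA$-modules to $(\JNC\cA)^\nabla$-modules. By Proposition \ref{prop:flatJet}, $(\JNC\cA)^\nabla \cong \cA$, so $(-)^\nabla$ induces the desired functor $\JNC\cA\Mod(\QC(X_{dR})) \to \cA\Mod(Shv_\C(X))$.

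The hard part will be the bookkeeping around Lemma \ref{lem:shvalg} over $X_{dR}$: one must know that $\QC(BH_n)$ and the induced sheaf of categories $\ind_p(\Anh\Mod(\QC(BH_n)))$ behave well even though $BH_n$ is only a prestack, and — this is the essential point — that $p^*\Anh$ is genuinely identified with $\JNC\cA$ together with its $\cD$-module structure. This is exactly where the NC-Gelfand-Kazhdan structure enters: it is what makes $\cN$, hence $\JNC\cA = \Assoc_\cN(\Anh)$, descend along $X \to X_{dR}$ and thereby realizes $\JNC\cA$ as the pullback under $p$ of the universal $H_n$-algebra $\Anh$ (compare Theorem \ref{thm:ThCoord}). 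Once that identification is in hand, the remaining manipulations are formal properties of induction functors and of lax monoidal functors.
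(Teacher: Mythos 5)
Your argument agrees with the paper's on every ingredient except the one substantive step, where it genuinely diverges: the construction of the functor $\QC(BH_n) \to \Anh\Mod(\QC(BH_n))$. The paper obtains this functor by restriction of scalars along what it calls ``the $H_n$-linear map $\Anh \to \C$,'' i.e.\ the augmentation; you instead use the free-module functor $V \mapsto \Anh \otimes_\C V$, induction along the unit $\C \to \Anh$, and your parenthetical remark that no $H_n$-equivariant augmentation exists flatly contradicts the paper's proof. Be aware of this discrepancy, but on the mathematics your route looks like the safer one: the augmentation is $H_n^+$-equivariant but not $\h_n$-equivariant, since the derivations dual to the generators in $\h_n \cong \widehat{T}(V) \otimes V^*$ (Lemma \ref{lem:DerAnh}) do not preserve the augmentation ideal --- exactly as in the commutative case, where the evaluation $\cJ\cO_X \to \cO_X$ is not horizontal and only restricts to an isomorphism on flat sections. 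The unit, by contrast, is killed by all derivations, so your functor is defined on the nose; and $M \mapsto \JNC\cA \otimes_{\cO_X} M$ is also the functor that actually matches the construction of \cite{pt} recalled in the subsection immediately following the proposition, whereas restriction along the augmentation would let $\cA$ act only through $\cA \to \cO_X$. The remaining steps of your proof --- the tautological identification $\ind_\pi\QC(BG_n) \cong \QC(BH_n)$, the use of Lemma \ref{lem:shvalg} to identify $\Gamma(X_{dR}, \ind_p(\Anh\Mod(\QC(BH_n))))$ with $\JNC\cA\Mod(\QC(X_{dR}))$, and the lax-monoidality/Leibniz-rule argument that flat sections of a $\JNC\cA$-module form a module over $(\JNC\cA)^\nabla \cong \cA$ via Proposition \ref{prop:flatJet} --- are exactly those of the paper, and the caveats you flag at the end (the prestack status of $BH_n$, and the identification of $p^*\Anh$ with $\JNC\cA$ as a crystal via the NC-Gelfand-Kazhdan structure) are real but are also left implicit in the paper itself.
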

\begin{proof}
The $H_n$-linear map $\Anh \to \C$ induces the desired functor 
\[ \C\Mod\left( \QC(BH_n) \right) \cong \QC(BH_n) \to \Anh\Mod(\QC(BH_n)).\]
It remains to see that $\cA$ acts on flat sections of a $\JNC \cA$-module. But this follows from proposition \ref{prop:flatJet} and the Leibniz rule.
\end{proof}

\begin{cor}
For every NC-thickening $\cA$ of $X$, we obtain a functor 
\[ \cD(X) \to \cA\Mod. \]
\end{cor}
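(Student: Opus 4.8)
The corollary is essentially a specialization of Proposition~\ref{prop:Dmod}, so the plan is to unwind the chain of functors constructed there for a fixed NC-thickening $\cA$ and observe that it lands in the desired target. First I would fix an NC-coordinate system $\cN = Coord(\cA)$, which exists and is essentially unique by Theorem~\ref{thm:ThCoord}; this gives, via Theorem~\ref{thm:reduction} and the corollary in Section~\ref{sec:GK} identifying $\cN/H_n \cong X_{dR}$, the commutative triangle of classifying maps $p:X_{dR} \to BH_n$, $q:X_{dR} \to BG_n$ with $q$ classifying $\cM$. The composite functor of Proposition~\ref{prop:Dmod} then reads $\cD(X) = \QC(X_{dR}) \to \JNC\cA\Mod(\QC(X_{dR})) \to \cA\Mod(Shv_\C(X))$, and the only thing left is to see that this factors through the usual category $\cA\Mod$ of sheaves of $\cA$-modules in the intended sense, and that the construction is independent (up to natural equivalence) of the auxiliary choice of $\cN$.

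The key steps, in order, are: (1) invoke Proposition~\ref{prop:Dmod} to get the functor $\QC(X_{dR}) \to \cA\Mod(Shv_\C(X))$; (2) identify $\QC(X_{dR})$ with $\cD(X)$ using the standard fact, recalled in Section~\ref{sec:recallFG}, that $\QC(X_{dR}) = \cD(X)$; (3) check that the essential image of the functor of flat sections $(\--)^\nabla$, applied to $\JNC\cA$-modules in $\QC(X_{dR})$, consists of genuine $\cA$-modules---this uses Proposition~\ref{prop:flatJet}, which gives $\cA \xrightarrow{\sim} (\JNC\cA)^\nabla$, so that $\cA$ acts on $\cM^\nabla$ for any $\JNC\cA$-module $\cM$ by the Leibniz rule, exactly as in the proof of Proposition~\ref{prop:Dmod}; (4) note that the whole construction is natural in $\cA$ and independent of the choice of $\cN$ up to canonical equivalence, since any two NC-coordinate systems associated to $\cA$ are canonically isomorphic.

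The main (and really only) obstacle is a bookkeeping one: ensuring that the various module categories---$\Anh\Mod(\QC(BH_n))$ pulled back along $p$, then globalized via Lemma~\ref{lem:shvalg} to $\JNC\cA\Mod(\QC(X_{dR}))$, then pushed through $(\--)^\nabla$---are being compared in compatible ambient categories, and that flat sections interact correctly with the $\JNC\cA$-module structure. Since Proposition~\ref{prop:Dmod} already assembles precisely this composite and verifies the Leibniz-rule point, the proof of the corollary is a one-line consequence: it suffices to record that the target $\cA\Mod(Shv_\C(X))$ of Proposition~\ref{prop:Dmod} is what we mean by $\cA\Mod$, and that the source is $\cD(X)$ by the identification $\QC(X_{dR}) = \cD(X)$. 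I would therefore write the proof as: ``This is immediate from Proposition~\ref{prop:Dmod} and the identification $\cD(X) = \QC(X_{dR})$,'' perhaps adding a sentence reminding the reader that $\cA$ acts on flat sections via Proposition~\ref{prop:flatJet} and the Leibniz rule, and that the construction does not depend on the auxiliary NC-coordinate system.
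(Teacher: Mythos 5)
Your proposal is correct and follows exactly the route the paper intends: the corollary is stated without proof precisely because it is the composite of the functors assembled in Proposition \ref{prop:Dmod}, together with the identification $\cD(X) = \QC(X_{dR})$ and the Leibniz-rule observation already made there. Your additional remarks on independence of the choice of $\cN$ are a reasonable (and harmless) elaboration, but the core argument matches the paper's.
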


\subsection{Comparison to the Functor of \cite{pt}}
The functor $\cD\Mod \to \cA\Mod$ in \cite{pt} is as follows. Given $M \in \cD\Mod$, we can write $M$ as a $\Omega^\bullet_X$-comodule via $M \otimes_{\cO_X} \Omega^\bullet_X$. Also, $\JNC \cA$ has a flat connection so we get a de Rham-comodule $\JNC \cA \otimes_{\cO_X} \Omega^\bullet_X$. We then form 
\[ K(M) = \left(M \otimes \Omega^\bullet_X\right) \otimes_{\Omega^\bullet_X} \left(\JNC \cA \otimes \Omega^\bullet_X\right). \]
This is a dg-module over $\JNC \cA \otimes \Omega^\bullet_X$. Taking the kernel of the differential in degree 0, we get that $H^0(K(M))$ is a module over $H^0(\JNC \cA \otimes \Omega^\bullet_X) \cong \cA$, as desired.

In proposition \ref{prop:Dmod}, we have the augmentation $\Anh \to \C$. This induces a functor on $H_n$-equivariant modules
\[ \QC(BH_n) \to Mod_{\Anh}(\QC(BH_n)). \]
which, associating categories to $\cN$, gives a functor 
\[ \cD(X) \to Mod_{\JNC \cA}(\cD(X)). \]
We then claim that $\cA$ acts naturally on the sheaf of flat sections of any $\JNC \cA$-module in $\cD(X)$, i.e., we have a composition 
\[ \cD(X) \to Mod_{\JNC \cA}(\cD(X)) \to Mod_\cA(Shv_\C(X)). \]
These two functors manifestly coincide.

\bibliographystyle{alpha}
\bibliography{ncc}

\end{document}